\newtheorem{theorem}{Theorem}[section]
\newtheorem{lemma}[theorem]{Lemma}
\newtheorem{proposition}[theorem]{Proposition}
\newtheorem{corollary}[theorem]{Corollary}
\newtheorem{definition}[theorem]{Definition}
\newtheorem{remark}[theorem]{Remark}
\newenvironment{proof}
{\begin{trivlist}  \item \textsc{Proof:}~} {\hfill $\Box$
\end{trivlist}}
\newenvironment{claim}
{\begin{trivlist}  \item \textsc{Claim}~} {\end{trivlist}}
\newenvironment{proof of claim}
{\begin{trivlist}  \item \textsc{Proof of Claim:}~} {\hfill $\Box$ (\textsc{Claim})
\end{trivlist}}
\newenvironment{proof of theorem}
{\begin{trivlist}  \item \textsc{Proof of Corollary \ref{sym}:}~} {\hfill $\Box$
\end{trivlist}}
\newcommand{\closure}[1]{\ensuremath{\mathrm{cl}}(#1)}
\newcommand{\interior}[1]{\ensuremath{\mathrm{int}}(#1)}
\def \Def {\operatorname{Def}}
\def \res_1 {\operatorname{res_1}}
\def \res {\operatorname{res}}
\def \st {\operatorname{st}}
\def \mod {\operatorname{mod}}
\def\Ind#1#2{#1\setbox0=\hbox{$#1x$}\kern\wd0\hbox to 0pt{\hss$#1\mid$\hss}
\lower.9\ht0\hbox to 0pt{\hss$#1\smile$\hss}\kern\wd0}
\def\Notind#1#2{#1\setbox0=\hbox{$#1x$}\kern\wd0\hbox to 0pt{\mathchardef
\nn=12854\hss$#1\nn$\kern1.4\wd0\hss}\hbox to
0pt{\hss$#1\mid$\hss}\lower.9\ht0 \hbox to
0pt{\hss$#1\smile$\hss}\kern\wd0}
\newcommand{\domain}[1]{\ensuremath{\mathrm{domain}}(#1)}
\newcommand{\range}[1]{\ensuremath{\mathrm{rng}}(#1)}
\newcommand{\ma}{\mathfrak{m}}
\newcommand{\bk}{\mathbf{k}}
\title{A note on definable matchings in o-minimal bipartite graphs}
\author{Jana Ma\v{r}\'{i}kov\'{a}}
\begin{document}

\maketitle

\begin{abstract}
We consider bipartite graphs definable in o-minimal structures, in which the edge relation $G$ is a finite union of graphs of certain measure-preserving maps.

We establish a fact on the existence of definable matchings with few short augmenting paths.
Under the additional assumptions that $G\subseteq [0,1]^n$ and 2-regularity,
this yields the existence of definable matchings covering all vertices outside of a set of arbitrarily small positive measure (Lebesgue measure of the standard part).    
As an application we obtain an approximate 2-cancellation result for the semigroup of definable subsets of $[0,1]^n$ modulo an equivalence relation induced by measure-preserving maps.   



\end{abstract}

\begin{section}{Introduction}
This paper is a first step towards understanding definable matchings in definable bipartite graphs in o-minimal structures. Matchings play an important part in many areas of mathematics, such as the theory of equidecompositions, and we believe they will prove higly relevant in the o-minimal setting as well.

Here, a {\em graph\/} consists of a nonempty set of {\em vertices\/} $V$ and a symmetric, antireflexive relation $E\subseteq V^2$ whose elements are called {\em edges\/}.  So graphs have no loops, no multiple edges, and edges are not oriented.  A {\em bipartite graph\/} is a graph whose set of vertices can be partitioned into two disjoint sets $A$ and $B$ so that each edge has one vertex in $A$ and the other vertex in $B$.  
A {\em matching\/} in a bipartite graph $(A\dot\cup B, E)$ is a subset of $E$ which is the graph of a bijection between a subset of $A$ and a subset of $B$.  A matching is {\em perfect\/} if it covers all vertices, i.e. if it is a bijection of $A$ onto $B$.

Throughout, we let $R$ be an o-minimal expansion of an ordered field.  Definable shall mean definable in $R$.
A graph $(V,E)$ is {\em definable\/} if both $V\subseteq R^n$ and $E\subseteq R^{2n}$ are definable.  A {\em definable bipartite graph\/} $(A \dot\cup B,E)$ is a definable graph with a definable bipartition, i.e. both $A$ and $B$ are definable.





What is the situation like for perfect matchings in (infinite) bipartite graphs without any definability assumptions?
By K\"{o}nig's Theorem, every $k$-regular bipartite graph admits a perfect matching. This is a special case of the infinite (two-sided) Hall-Rado-Hall Theorem, according to which a locally finite bipartite graph admits a perfect matching if it satisfies the marriage condition for finite sets (for each $k$, every $k$-element set of vertices has at least $k$ neighbors) in either part.  
However, the definable versions of these two theorems fail, as evidenced by an example by
Laczkovich \cite{L}.  Laczkovich defines a semilinear graph whose edge relation $E$ is a closed subset of the unit square and which consists of finitely many line segments with slopes $\pm 1$ (in fact, $E$ is, when considered as a space with normalized linear measure -- up to a measure-preserving homeomorphism -- just the unit circle). While $E$ contains a perfect matching by K\"onig's Theorem, Laczkovich shows that it does not contain a Borel matching nor a Lebesgue measurable matching.
This is, roughly, due to the fact that, while the normalized linear measure of a matching $M$ in $E$ would be $\frac{1}{2}$, $M$ would also have to be fixed by a certain map which is essentially an irrational rotation of the circle, hence ergodic.
Given that $E$ is in particular definable in an o-minimal structure, this dashes the hope of a definable analogue of K\"onig, or even Hall-Rado-Hall.

One way around this, in the presence of a measure, is to relax the requirement of the matching being perfect to being perfect only outside of a small set.  This has been done in the Borel case by Lyons and Nazarov in \cite{ln}, p.8, Remark 2.6. (for a detailed exposition of the proof see Wang \cite{wang}). Lyons and Nazarov prove the following.  Below, a set of vertices is {\em independent\/}, if no two vertices in that set are neighbors, i.e. they are not incident with the same edge. For a set of vertices $Y$ and edge relation $G$,  \[N_G (Y)=\{ x\colon \exists y\in Y \; (x,y) \in G
\}.\] 
\begin{theorem}[\cite{ln}]\label{ln}
Let $\mathcal{G}=(X,G)$ be a Borel graph on a standard Borel space with a Borel probability measure $\nu$ that is locally finite, $\nu$-preserving, bipartite, and satisfies the follwing expansion condition:
\[\exists c>1 \mbox{ such that for all independent }Y\subseteq X, \; \nu N_G (Y) \geq c \cdot \nu Y. \]  Then $\mathcal{G}$ has a Borel perfect matching $\nu$-a.e..
\end{theorem}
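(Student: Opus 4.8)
The plan is to build the matching by iterated augmentation: start from the empty matching, repeatedly flip augmenting paths in a Borel-measurable way, and extract a Borel matching as a pointwise limit. Write $X=A\dot\cup B$ for the bipartition and, for a Borel matching $M$, let $U_A(M)\subseteq A$, $U_B(M)\subseteq B$ be the (Borel) sets of uncovered vertices. The expansion hypothesis — read with $\nu N_G(Y)$ understood as capped by the measure of the part containing $N_G(Y)$ — is used twice: it shows an augmenting path exists whenever the uncovered mass is positive, and, decisively, that such a path can be taken of length bounded in terms of $c$ and the uncovered mass.

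\medskip
\noindent\textbf{Step 1: expansion gives short augmenting paths.} I would first show that there is $\ell(\delta)=O_c(\log(1/\delta))$ so that every Borel matching $M$ with $\nu U_A(M)\ge\delta$ has an augmenting path of length $\le\ell(\delta)$. This is an alternating breadth-first search: let $S^{(m)}_A$ be the vertices of $A$ reachable from $U_A(M)$ by an alternating path of length $\le 2m$ beginning with a non-matching edge, and $S^{(m)}_B$ the analogue in $B$ for length $\le 2m+1$, so $S^{(0)}_A=U_A(M)$. If $M$ has no augmenting path of length $\le 2m+1$ then $S^{(m)}_B$ is covered by $M$ and $M$ injects it into $S^{(m+1)}_A\setminus U_A(M)$; by $\nu$-preservation of $M\subseteq G$ this yields $\nu S^{(m+1)}_A\ge\nu S^{(m)}_B+\delta$. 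Since $N_G(S^{(m)}_A)\subseteq S^{(m)}_B$ and $S^{(m)}_A\subseteq A$ is independent, the hypothesis gives $\nu S^{(m)}_B\ge c\,\nu S^{(m)}_A$ (as long as this does not exceed $\nu B$, in which case $S^{(m)}_B$ is already co-null in $B$ and we are done anyway). Feeding this back, $\nu S^{(m)}_A$ grows geometrically until it would exceed $\nu A$, which happens after $O_c(\log(1/\delta))$ steps — a contradiction.

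\medskip
\noindent\textbf{Step 2: a measurable Hopcroft--Karp iteration.} Put $M_0=\emptyset$. Given $M_i$, form the Borel graph $\mathcal H_i$ on the $M_i$-augmenting paths that are shortest within their connected component (a Borel notion, by Borel breadth-first search in the locally finite Borel graph), joining two paths that share a vertex; $\mathcal H_i$ is locally finite, so by standard descriptive set theory (Kechris--Solecki--Todorcevic) it has a Borel maximal independent set $I_i$, i.e.\ a Borel maximal family of pairwise vertex-disjoint shortest augmenting paths. Set $M_{i+1}=M_i\,\triangle\bigcup_{p\in I_i}p$; vertex-disjointness makes this a Borel matching, and since augmenting paths join uncovered vertices, $U_A(M_{i+1})\subseteq U_A(M_i)$ and $U_B(M_{i+1})\subseteq U_B(M_i)$ — the covered part only grows. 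The classical Hopcroft--Karp lemma (its proof is a local, finite symmetric-difference computation and carries over verbatim) shows the length $L_i$ of a shortest $M_i$-augmenting path is strictly increasing; with Step 1 contrapositively this forces $\nu U_A(M_i)\to 0$, in fact geometrically (linear growth of $L_i$ against logarithmic $\ell$), and symmetrically $\nu U_B(M_i)\to 0$.

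\medskip
\noindent\textbf{Step 3: the limit, which is the crux.} The $M_i$ need not converge pointwise, since a vertex can be re-matched in infinitely many phases; this is the main obstacle. I would bound the measure of the set $C_i$ of vertices whose $M_i$-partner differs from their $M_{i-1}$-partner: these lie on the phase-$i$ augmenting paths, which are pairwise vertex-disjoint, have $\le L_i+1$ vertices each, and each start in $U_A(M_{i-1})$; sending a vertex of $C_i$ to the start of its path is $\le(L_i+1)$-to-one and, by $\nu$-preservation, measure-non-increasing on fibres, so $\nu C_i\le (L_i+1)\,\nu U_A(M_{i-1})$. By Step 2 the right-hand side is summable (geometric decay of the uncovered mass beats the at-most-linear growth of $L_i$), so $\sum_i\nu C_i<\infty$ and, by Borel--Cantelli, $\nu$-a.e.\ vertex lies in only finitely many $C_i$. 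Hence $(M_i)$ converges off a $\nu$-null set to a Borel matching $M_\infty$, and since the iteration is monotone with $\nu U_A(M_i),\nu U_B(M_i)\to0$, the vertices uncovered by $M_\infty$ form $\nu$-null subsets of $A$ and of $B$ — that is, $M_\infty$ is a perfect matching $\nu$-a.e. The difficulty is concentrated here: one needs the shortest-path discipline of Steps 1--2 to force the uncovered mass to zero (knowing merely that \emph{some} augmenting path exists does not suffice) and the disjointness-and-measure-preservation estimate to make the matchings settle down a.e.; the per-phase measurability is routine.
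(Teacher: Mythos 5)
The paper does not prove Theorem~\ref{ln} itself; it cites it and points to Wang's exposition, and then adapts that argument (following Elek--Lippner) to the o-minimal setting in Lemma~\ref{colorings}, Proposition~\ref{shortaps} and Theorem~\ref{appl}. Your proposal has the same three-stage skeleton as the Lyons--Nazarov/Wang argument (expansion forces logarithmically short augmenting paths; iterated augmentation; a measure estimate to make the iteration converge a.e.), but it replaces the Elek--Lippner coloring machinery with a Hopcroft--Karp phase structure driven by Borel maximal independent sets of shortest augmenting paths (via Kechris--Solecki--Todorcevic). The coloring route, which the paper uses, cycles through the color classes of a Borel/definable coloring of the paths-graph and only achieves a matching with \emph{few} short augmenting paths (measure $<\delta$), whereas your MIS/Hopcroft--Karp route gives the cleaner ``no augmenting paths of length $\le L_i$''; the trade-off is that the coloring route is what transfers to the o-minimal setting (where a Borel MIS theorem is unavailable), which is why the paper uses it.

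There is one genuine slip in Step~3, though it is repairable. You take the phase-$i$ paths to be ``shortest within their connected component,'' and then bound $\nu C_i \le (L_i+1)\,\nu U_A(M_{i-1})$ by mapping each changed vertex to the start of its path, asserting the paths have $\le L_i+1$ vertices each. But $L_i$ is the \emph{global minimum} shortest-augmenting-path length, so with ``shortest within component'' paths the phase-$i$ paths have length \emph{at least} $L_i$, and across components these lengths can be arbitrarily large; $(L_i+1)$ is a lower bound on path size, not an upper bound, so the estimate does not follow as written. Two clean fixes: (a) in phase $i$ flip only the \emph{globally} shortest augmenting paths (i.e.\ those of length exactly $L_i$); then all phase-$i$ paths do have $L_i+1$ vertices and your estimate is correct, while the per-component Hopcroft--Karp analysis still forces $L_{i+1}>L_i$ since a global Borel MIS is maximal inside each component; or (b) keep shortest-within-component paths but replace the single bound by a tail estimate: the starting vertices of phase-$i$ paths of length $>j$ lie in components with shortest augmenting path $>j$, and Step~1 (applied to the BFS from exactly that set of uncovered vertices) bounds its $\nu$-measure by $e^{-\Omega_c(j)}$, so summing the layer measures over $j$ still yields $\nu C_i = O_c\bigl(\nu U_A(M_{i-1})\log(1/\nu U_A(M_{i-1}))\bigr)$, which is summable. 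Either way the Borel--Cantelli conclusion stands; but as written, Step~3 uses an incorrect uniform bound on path length and needs one of these corrections.
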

We are interested in an o-minimal counterpart of this theorem.  While the existence of definable matchings in o-minimal graphs is of interest in its own right, another reason is the following.
The condition of being $\nu$-preserving corresponds, in our setting, to the edge relation being a finite union of graphs of isomorphisms (roughly, definable $C^1$-diffeomorphisms with Jacobian determinant equal to $\pm 1$).  
Such graphs and the question of the existence of a perfect matching in them come up when dealing with 
the semigroup of bounded definable sets modulo the equivalence relation induced by isomorphisms, with the operation being given by disjoint union.  These semigroups are in turn closely linked to a long-standing open question about the existence of invariant measures on definable sets in o-minimal structures. 

We obtain Theorem \ref{mainthm} below, an approximate version of Theorem \ref{ln}, when the measure under consideration is Lebesgue measure of the standard part and when we restrict ourselves to 2-regular graphs. The assumption of 2-regularity replaces the expansion condition in Theorem \ref{ln}, which is never satisfied in the bounded definable setting, given that the bipartition of a definable bipartite graph is assumed to be definable.

\begin{theorem}\label{mainthm}
Let $\mathcal{G}=(A\dot\cup B, G)$ be a definable bipartite $\mu$-preserving graph which is 2-regular and such that $A,B\subseteq [0,1]^n$.  Then for every $\epsilon \in \mathbb{R}^{>0}$ there is a definable matching $M\subseteq G$ covering all vertices of $\mathcal{G}$ outside of a set of $\mu$-measure $<\epsilon$. 
\end{theorem}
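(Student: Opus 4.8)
The plan is to reduce to a general ``augmentation'' statement and then bootstrap it using $2$-regularity, boundedness, and a mass-transport identity. For a definable matching $M$ in a definable bipartite $\mu$-preserving locally finite graph $(A\dot\cup B,G)$ and $k\in\mathbb N$, write $S_{M,k}$ for the (definable) set of vertices lying on an $M$-augmenting path of length $\le 2k+1$. I would first establish the following \emph{Fact}: if $A,B\subseteq[0,1]^n$, then for every $k$ and every $\delta>0$ there is a definable matching $M\subseteq G$ with $\mu(S_{M,k})<\delta$. Given the Fact, Theorem~\ref{mainthm} follows by choosing $k$ with $\tfrac1{2k}<\tfrac\epsilon2$, setting $\delta=\tfrac\epsilon2$, and bounding the $\mu$-measure of the vertices left uncovered by the resulting $M$.

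\textbf{Proof of the Fact.} I would measure a definable matching by $u(M):=\mu(\{v\in A\cup B:\ v\text{ is not covered by }M\})$ and set $s:=\inf\{u(M):\ M\text{ a definable matching}\subseteq G\}$, which is finite since $u(M)\le\mu(A\cup B)\le 1$. The point is to find $c=c(k,\delta)>0$, \emph{independent of} $M$, such that every definable matching $M$ with $\mu(S_{M,k})\ge\delta$ admits a definable matching $M'\subseteq G$ with $u(M')\le u(M)-c$; then any definable $M$ with $u(M)<s+\tfrac c2$ necessarily has $\mu(S_{M,k})<\delta$. To produce $M'$: every augmenting path of length $\le 2k+1$ is a walk of boundedly many steps with one endpoint an uncovered vertex of $A$, and a walk of prescribed pattern is a composition of restrictions of the measure-preserving maps whose graphs make up $G$ (hence measure non-increasing for $\mu$); so the set $S_0\subseteq A$ of uncovered $A$-endpoints of such paths is definable and $\mu(S_{M,k})\le L\cdot\mu(S_0)$ for some $L=L(k)$, giving $\mu(S_0)\ge\delta/L$. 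Using definable Skolem functions, choose for each $a\in S_0$ an augmenting path $\pi(a)$ of length $\le 2k+1$ through $a$; the relation ``$\pi(a)$ and $\pi(a')$ share a vertex'' is a definable graph $H$ on $S_0$ of uniformly bounded degree (by local finiteness, its neighbourhoods are contained in balls of radius $4k+2$). Take a definable maximal independent set $S_1$ of $H$; by maximality of $S_1$ and the measure non-increase of the walk maps, $\mu(S_1)\ge\mu(S_0)/(L'+1)\ge c$ for suitable $c=c(k,\delta)>0$, where $L'=L'(k)$ bounds the number of relevant walk patterns. Augmenting $M$ simultaneously along the pairwise vertex-disjoint family $\{\pi(a):a\in S_1\}$ yields a definable matching $M'\subseteq G$ with $u(M')\le u(M)-\mu(S_1)\le u(M)-c$.

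\textbf{Bootstrap via $2$-regularity.} Now assume $\mathcal G$ is $2$-regular with $A,B\subseteq[0,1]^n$; fix $\epsilon>0$, choose $k$ with $\tfrac1{2k}<\tfrac\epsilon2$, put $\delta=\tfrac\epsilon2$, and let $M$ be the matching given by the Fact. Since $\mathcal G$ is $2$-regular and bipartite, each component is an even cycle or a bi-infinite path. If $v$ is uncovered and its component has at most $2k$ vertices, then $v\in S_{M,k}$, because the subpath joining two consecutive uncovered vertices along a cycle or path is alternating, hence augmenting; likewise any two uncovered vertices of a common component lie at $\mathcal G$-distance $>2k+1$, else the first uncovered vertex met between them would yield an augmenting path of length $\le 2k+1$. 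Let $D$ be the set of uncovered vertices that lie in a component with more than $2k$ vertices and are not in $S_{M,k}$; then every uncovered vertex lies in $D\cup S_{M,k}$, the points of $D$ are pairwise at $\mathcal G$-distance $>2k+1$ within their components, and each $v\in D$ has exactly $2k$ vertices at $\mathcal G$-distance between $1$ and $k$. Applying the mass-transport principle for $\mu$-preserving graphs to $g(v,w)=\tfrac1{2k}\,\mathbf 1[v\in D]\,\mathbf 1[1\le d_{\mathcal G}(v,w)\le k]$, the mass sent out of each $v$ is $\mathbf 1[v\in D]$ and the mass into each $w$ is at most $\tfrac1{2k}$ (at most one point of $D$ lies within distance $k$ of $w$), so $\mu(D)\le\tfrac1{2k}\mu(A\cup B)\le\tfrac1{2k}<\tfrac\epsilon2$. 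Hence $\mu(\{v:\ v\text{ uncovered by }M\})\le\mu(D)+\mu(S_{M,k})<\tfrac\epsilon2+\tfrac\epsilon2=\epsilon$, which is the conclusion.

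\textbf{Main obstacle.} The heart of the matter is the Fact --- concretely, the definable extraction of a positive-measure family of pairwise vertex-disjoint short augmenting paths, equivalently a definable independent set of controlled measure in a definable bounded-degree graph, with the bound $c(k,\delta)$ uniform in $M$. This is where the o-minimal inputs are essential: definable choice and the existence of definable maximal independent sets in definable locally finite graphs. A secondary technical point is the mass-transport identity for the standard-part Lebesgue measure $\mu$, which relies on writing bounded-distance relations as finite disjoint unions of graphs of definable measure-preserving maps together with finite additivity of $\mu$ and its invariance under such maps; these properties of $\mu$ should be available in this setting but have to be verified.
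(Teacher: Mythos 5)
Your high-level architecture matches the paper's: first establish a ``Fact'' that definable matchings with few short augmenting paths exist, then bootstrap via $2$-regularity and a counting/transport identity to bound the measure of uncovered vertices by roughly $1/K$. However, your proof of the Fact is genuinely different from the paper's and, as written, rests on an unjustified selection principle.

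The paper's route to the Fact is: an ``almost-coloring'' lemma (covering the compact domain by small balls after removing a thin boundary strip, yielding a \emph{definable} coloring of the auxiliary path-incidence graph outside a small set), and then an iterative scheme that flips, at step $k$, all augmenting paths of color $a_k = k \bmod C$; a uniform bound on how often a single edge can flip forces termination with few short augmenting paths remaining. This avoids having to \emph{select} one improving path per uncovered vertex: it improves all same-colored paths simultaneously and iterates through colors. Your route instead minimizes $u(M)$ over all definable matchings and argues that any $M$ with $\mu(S_{M,k}) \geq \delta$ can be improved by a uniform $c(k,\delta)$. The engine of your improvement step is ``Take a definable maximal independent set $S_1$ of $H$.'' This is precisely the kind of definable selection statement that is \emph{not} known to hold in o-minimal structures, and the Laczkovich example (and the failure of definable K\"onig/Hall which motivates the whole paper) strongly suggests it can fail. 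The Borel analogue is the Kechris--Solecki--Todorcevic theorem, which has no o-minimal counterpart in the literature, and the paper's almost-coloring lemma cannot be upgraded to supply one: it colors only outside a set whose measure and the number $C$ of colors depend on $M$, so one cannot extract from it a color class of measure bounded below uniformly in $M$, which is exactly what your uniform $c(k,\delta)$ requires. This is the main gap. (Separately, your minimization skips the paper's careful two-step treatment --- proving the archimedean case first and then transferring via the standard-part machinery from \cite{st}, \cite{thesispaper} --- and it is not clear your argument can be run directly in the non-archimedean $R$ without that reduction.)

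For the bootstrap, the paper proves the identity $\mu Y_K = K \cdot \mu Y_0$ by a hands-on tree-labeling argument (Lemma~\ref{counting}) rather than invoking a mass-transport principle for $\mu$. Your transport identity is plausible --- one can decompose the bounded-distance relation into finitely many graphs of $n$-isomorphisms and use finite additivity and invariance of $\mu$, exactly as you say --- and it would deliver the same $\mu Y_0 \lesssim 1/K$ bound more conceptually, but you correctly flag it as needing verification; the paper simply sidesteps it. If you want to repair your proof of the Fact in the spirit you proposed, the natural move would be to replace ``definable maximal independent set'' by the paper's almost-coloring plus iterated color-by-color flipping, at which point you essentially recover the paper's argument.
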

The proof follows the general outline of the proof in \cite{wang}. In particular, we first prove the existence of matchings with few short augmenting paths, adapting an argument by Elek and Lippner \cite{el}.  We start with the 
archimedean case.  This is Proposition \ref{shortaps} -- the measure under consideration is Lebesgue measure and there is no need to use 2-regularity.  The general case (Theorem \ref{main}) is then derived using results from Ma\v{r}\'ikov\'a \cite{st}, \cite{thesispaper} concerning the structure induced on the residue field by the standard part map.
Theorem \ref{mainthm} it then derived by an argument similar to the one in \cite{wang}, but with 2-regularity yielding an expansion condition.

We remark that Theorem \ref{appl} cannot be improved to yield a definable matching $\mu$-a.e.\ due to the example in \cite{L}.

An application of Theorem \ref{mainthm} concerns cancellation in the semigroup of bounded definable sets modulo the equivalence relation induced by isomorphisms. More precisely, let $B[n]$ be the lattice of bounded definable subsets of $R^n$, and set \[SB[n]=\{ X\in B[n] \colon X\subseteq [-m,m]^n \mbox{ for some } m\in \mathbb{N}\},\] the lattice of strongly bounded definable subsets of $R^n$.
For $X,Y \in SB[n]$ and $\epsilon \in \mathbb{R}^{>0}$, we write $X=_{\epsilon}Y$ iff $\mu (X\triangle Y)<\epsilon$, where $\mu$ is the standard part map composed with Lebesgue measure, and $\triangle$ is symmetric difference.  We write $X=_{a}Y$ iff $X=_{\epsilon}Y$ for all $\epsilon \in \mathbb{R}^{>0}$.  Let $\sim$ be the equivalence relation induced on $B[n]$ by isomorphisms (see Definition \ref{equiv}).
Then $\mathcal{T}_n=B[n]/\sim$ is a semigroup with addition given by disjoint union. For $\alpha, \beta \in \mathcal{T}_n$, we write $\alpha =_a \beta$ iff there are $X\in \alpha$, $Y \in \beta$ such that $X,Y\in SB[n]$ and $X=_{a}Y$.  Then Theorem \ref{mainthm} yields the following.       

\begin{theorem}
Let $\alpha , \beta \in \mathcal{T}_n$. Then $2 \alpha =_a  2 \beta$ implies $\alpha =_a \beta$.

\end{theorem}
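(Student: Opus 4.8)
The plan is to reduce the statement to an application of Theorem~\ref{mainthm}. Suppose $2\alpha =_a 2\beta$. Unwinding the definition of $=_a$ on $\mathcal{T}_n$, we obtain representatives: there are $X\in\alpha$ and $Y\in\beta$ with $X,Y\in SB[n]$ and $X\dot\cup X =_a Y\dot\cup Y$ in the sense that some strongly bounded $U \sim X\dot\cup X$ and $U'\sim Y\dot\cup Y$ satisfy $U=_aU'$. After rescaling (which preserves $\sim$, $SB[n]$, and the relation $=_a$ up to a harmless constant factor in $\epsilon$, since scaling $[-m,m]^n$ into $[0,1]^n$ multiplies $\mu$ by $(2m)^{-n}$), I may assume all sets in sight lie in $[0,1]^n$, and in fact that $U=2X$ and $U'=2Y$ are literally two disjoint copies of $X$ and of $Y$ respectively, with $U=_aU'$ witnessed by an isomorphism-chain collapsed into the statement $\mu(U\triangle U')=0$ after identifying domains appropriately — more carefully, $X\dot\cup X\sim Y\dot\cup Y$ means there is a definable $\mu$-preserving bijection (an isomorphism) between them, and this is the object I feed into the graph machinery.

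The key step is to build from an isomorphism $f\colon X\dot\cup X \to Y\dot\cup Y$ a definable bipartite 2-regular $\mu$-preserving graph to which Theorem~\ref{mainthm} applies. Take $A$ to be (a copy of) $X$ and $B$ to be (a copy of) $Y$, sitting inside $[0,1]^n$ after the rescaling, and let the edge relation $G$ record, for each vertex, the two partners it acquires from the two ``copies'': concretely, $f$ restricted to the first copy of $X$ gives one matching-like relation from part of $A$ to $B$, $f$ restricted to the second copy gives another, and together with the analogous decomposition on the $Y$-side these assemble into a relation $G\subseteq A\times B$ that is a finite union of graphs of isomorphisms and in which every vertex has degree exactly $2$. (This is the standard trick: $2X\sim 2Y$ exhibits $X$ and $Y$ as the two sides of a 2-regular bipartite graph, just as $2\alpha=2\beta$ in a cancellative setting forces $\alpha=\beta$ via a matching argument.) Then Theorem~\ref{mainthm} supplies, for each $\epsilon>0$, a definable matching $M_\epsilon\subseteq G$ covering all of $A$ and $B$ outside a set of $\mu$-measure $<\epsilon$; being a matching, $M_\epsilon$ is the graph of a definable bijection, and being contained in $G$ — a finite union of graphs of isomorphisms — it is piecewise an isomorphism, hence $\mu$-preserving on each piece. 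So $M_\epsilon$ witnesses that $X$ restricted to a set of measure $>\mu(X)-\epsilon$ is $\sim$-equivalent, via a $\mu$-preserving definable bijection, to $Y$ restricted to a set of measure $>\mu(Y)-\epsilon$. Letting the uncovered bits be absorbed into the $\epsilon$-slack, we get $X=_\epsilon$ (a set $\sim$-equivalent to $Y$), for every $\epsilon$, which is exactly $\alpha=_a\beta$.

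The main obstacle I expect is bookkeeping at the level of the semigroup, not the graph theory: the definition of $=_a$ on $\mathcal{T}_n$ quantifies existentially over representatives lying in $SB[n]$, so I must check that the representative of $2\alpha=2\beta$ I am handed can be massaged — by rescaling into $[0,1]^n$ and by replacing $2\alpha$ with a genuine disjoint-copies model — into one where the 2-regular graph is literally definable with both parts in $[0,1]^n$, and that at the end the ``approximately matched'' conclusion of Theorem~\ref{mainthm} genuinely upgrades to an $=_\epsilon$ statement between representatives of $\alpha$ and $\beta$ (one must discard the uncovered low-measure vertex sets and argue their symmetric-difference contribution is $<\epsilon$, using that a matching omitting a set of measure $<\epsilon$ from each side changes the relevant sets by $<\epsilon$ in $\mu$). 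A secondary point to verify is that a definable matching $M\subseteq G$, with $G$ a finite union of graphs of isomorphisms, is itself (piecewise) an isomorphism in the sense of Definition~\ref{equiv}, so that it certifies $\sim$-equivalence and not merely the existence of a $\mu$-preserving definable bijection; this should be routine from the cell-decomposition/piecewise structure of definable maps, but it is the hinge connecting ``matching'' back to ``$\sim$.''
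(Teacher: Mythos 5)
Your overall plan — build a bipartite graph $G$ between (copies of) a representative of $\alpha$ and of $\beta$ from the isomorphism chain witnessing $2\alpha =_a 2\beta$, note that each vertex acquires two edges because it appears twice in the disjoint double, and then apply the matching machinery — is the paper's strategy, and the pairing bookkeeping $\bar{A}=\{(a,\phi(a))\}$, $\bar{B}=\{(b,\psi(b))\}$ the paper uses is only a cosmetic difference from your presentation. However, there is a genuine gap: you assert that the resulting graph is \emph{exactly} 2-regular and invoke Theorem~\ref{mainthm} directly. That is not what the hypotheses give you. The relation $\sim$ of Definition~\ref{equiv} is witnessed by piecewise isomorphisms over \emph{open} partitions, which necessarily miss boundary sets, and the chain of equivalences one extracts from $2\alpha=_a 2\beta$ (going $A\dot\cup A' \sim U$, $\mu(U\triangle V)=0$, $V\sim B\dot\cup B'$) is defined only outside a small-measure defect. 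On that defect, vertices of the graph have degree $0$ or $1$, so the hypothesis of Theorem~\ref{mainthm} is simply not met. Nor can you quietly discard the bad vertices: removing them lowers the degree of their neighbors, so you do not recover a 2-regular graph that way.

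The paper confronts exactly this obstacle. It first proves Corollary~\ref{almost2regular}, a variant of Theorem~\ref{appl} for graphs that are $2$-regular outside a set of small positive measure on which the degree equals $1$, and then in the proof of Theorem~\ref{2cancellation} it explicitly strips the graph: first removing the degree-$0$ vertices (small measure), then removing degree-$1$ vertices incident to other degree-$1$ vertices (which contribute a free partial matching), so that what remains satisfies the hypothesis of Corollary~\ref{almost2regular}. That trimming, together with the extension from strict to "almost" $2$-regularity, is the substantive step your proposal is missing. The two secondary issues you flag — that a matching contained in a finite union of graphs of $n$-isomorphisms is itself piecewise an isomorphism, and that the $\epsilon$-measure uncovered sets can be absorbed into the symmetric-difference slack — are real but routine; the crux is that you are not entitled to assume $2$-regularity in the first place.
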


\smallskip\noindent
{\bf Some further conventions and definitions.\/}
We let $\mathcal{O}$ be the convex hull of $\mathbb{Q}$ in $R$. Then $\mathcal{O}$ is a valuation ring in $R$ with maximal ideal $\ma$ and residue map $\st \colon \mathcal{O}\to \bk$, where $\bk = \mathcal{O}/\ma$ is the ordered residue field. The residue map extends coordinate-wise to $\st \colon \mathcal{O}^n \to \bk^n $.  If $R$ is sufficiently saturated, then $\mathcal{O}/\ma = \mathbb{R}$ and the residue map is called the standard part map.  In that case, we denote by $\mathbb{R}_{ind}$ the o-minimal structure on $\mathbb{R}$  which is generated by the standard part map, i.e. the ordered field $\mathbb{R}$ expanded by the relations $\st X$, where $X\in \Def^n (R)$ and $\st X := \st (X \cap R^n )$. 

\begin{definition}\label{weakisodef}

\begin{enumerate}
    \item By a {\em measure\/} on $SB[n]$ we mean a finitely additive map $\mu \colon SB[n] \to \mathbb{R}^{\geq 0}$ (addition on $SB[n]$ is given by disjoint union) such that $\mu(\emptyset )=0$.

\item An {\em n-isomorphism\/} is a definable $C^{1}$-diffeomorphism $f\colon U\to f(U)$, where $U\subseteq R^n$ is definable and open, and $|J f(x)|=1$ for all $x\in U$.
    
    \item Given a measure $\mu$ on $SB[n]$, we say that $\mu$ is {\em invariant\/} if $\mu (X)=\mu (f(X))$ whenever $X\in SB[n]$ and $f$ is an $n$-isomorphism.
    
    \item Let $\nu$ be an invariant measure on $SB[n]$. We say that $\mathcal{G}=(V,G)$ is {\em $\nu$-preserving\/} if $\mathcal{G}$ is definable, $V\subseteq \mathcal{O}^n$, and there is a partition of $V$ into cells such that for each open cell $C$ in this partition, $G\cap (C\times R^n)$ is the graph of an $n$-isomorphism.
    \end{enumerate}
\end{definition}
For $R$ sufficiently saturated, we define an invariant measure $\mu$ on $SB[n]$ by assigning to $X\in SB[n]$ the $n$-dimensional Lebesgue measure of its standard part (see \cite{st}, p. 18, proof of Lemma 6.4, for a proof of invariance).

\begin{remark}
It will be easy to see that Theorems \ref{main}, \ref{appl},  and \ref{2cancellation} remain valid if we replace 2.-4. in Definition \ref{weakisodef} by the following, perhaps more natural, notions.

\begin{definition}
\begin{enumerate}
 \item An {\em n-isomorphism\/} is a definable $C^1$-diffeomorphism $f\colon R^n \to R^n$ with $|J f(x)|=1$ for all $x\in R^n$.

 \item Given a measure $\mu$ on $SB[n]$, we say that $\mu$ is {\em invariant\/} if $\mu (X) = \mu (f(X))$ whenever $X\in SB[n]$ and $f$ is an $n$-isomorphism.

\item Let $\nu$ be an invariant measure on $SB[n]$.
We say that a definable graph $\mathcal{G}=(V,G)$ with $V\subseteq \mathcal{O}^n$ is {$\nu$-preserving\/} if there is a partition of $V$ into cells such that for each open cell $C$ in this partition, $G\cap (C\times R^n)$ is the graph of an $n$-isomorphism restricted to $C$.
\end{enumerate}
\end{definition}
\end{remark}
For $x,y \in R^n$ and definable, bounded $X\subseteq R^n$, we let $d(x,y)$ be the euclidean distance between $x$ and $y$, and we set \[d(X,y)=\inf\{d(x,y)\colon x\in X \}.\]
For $x\in R^n$ and $r>0$, we denote by $B_r (x)$ the open ball of radius $r$ centered at $x$, i.e. the set $\{y\in R^n \colon \; d(x,y)<r \}$.
\end{section}

\begin{section}{The archimedean case}
In this section, we assume that the underlying set of $R$ is $\mathbb{R}$.  Then $SB[n]=B[n]$ and Lebesgue measure $\lambda$ is an invariant measure on $B[n]$.
\begin{subsection}{Colorings}
\begin{definition}
Let $\mathcal{G}=(V,G)$ be a definable graph.  We say that a definable map $c\colon V \to X$ is a {\em definable coloring\/} of $\mathcal{G}$ if $X$ is a finite set, and whenever $(v,w)\in 
G$, then $c(v)\not=c(w)$. 
\end{definition}

\begin{lemma}\label{colorings}
Let 
$\mathcal{G}=(V,G)$ with $V\in B[n]$ be a definable graph such that every vertex has finite degree. Then there is a definable coloring of $\mathcal{G}$ outside of a definable subset of the vertex set of arbitrarily small positive $\lambda$-measure, i.e. for every $\epsilon >0$ there is a definable $V'\subseteq V$ with $\lambda (V\setminus V')<\epsilon $ and a definable coloring of $\mathcal{G}' :=(V', G\cap (V')^2 )$. 

\end{lemma}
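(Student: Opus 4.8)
The plan is to reduce the problem to a statement about bounded-degree graphs where a classical greedy/inductive coloring argument applies, and to use o-minimal cell decomposition to make everything definable. First I would observe that, since every vertex of $\mathcal{G}$ has finite degree and $V$ is definable, the degree function $v \mapsto \deg(v)$ is definable with finite image on a dense definable subset; more carefully, by o-minimality the set $V_d = \{v \in V : \deg(v) = d\}$ is definable for each $d \in \mathbb{N}$, and $V = \bigcup_d V_d$. A compactness/cell-decomposition argument (or the fact that a definable family of finite sets has a uniform finite bound on a cofinite-in-dimension piece) shows that, after discarding a definable subset of arbitrarily small $\lambda$-measure, we may assume the degree is bounded by some fixed $D \in \mathbb{N}$: the vertices of degree $\geq D$ form, for the cofinal tail, a definable set which by cell decomposition we can arrange to have small measure (those of the $V_d$ with large $d$ that are full-dimensional contribute to a finite union, so only finitely many $d$ matter, hence a uniform bound $D$ exists on a large piece). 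So it suffices to color a definable graph of degree $\leq D$ on a definable vertex set of arbitrarily small measure removed.

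Next, for the bounded-degree definable graph, I would build the coloring by a definable greedy procedure using $D+1$ colors. The key geometric input is that, by o-minimal cell decomposition applied to $G \subseteq V^2 \subseteq R^{2n}$, we can decompose $G$ into finitely many cells each of which is the graph of a definable continuous function; equivalently, the ``neighbor selection'' maps $v \mapsto$ ($i$-th neighbor of $v$ in some definable ordering) for $i = 1, \dots, D$ are piecewise definable and continuous off a lower-dimensional (hence $\lambda$-null, after removing a small-measure definable neighborhood) bad set. On a full-dimensional cell where all these neighbor maps are defined and the combinatorial structure is constant, one runs the standard argument: order the finitely many cells, and on each cell assign colors so as to avoid the (finitely many) colors already forced by neighbors lying in earlier-processed cells and by the finitely many neighbor-maps; since at most $D$ neighbors exist, $D+1$ colors suffice, and the resulting color class is definable because it is a Boolean combination of the definable cells and the definable preimages under the neighbor-maps of already-colored regions. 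One has to be slightly careful that within a single cell two neighbors could receive the same tentative color; this is handled by further refining the partition finitely many times (each refinement is definable) so that within each piece the adjacency pattern to other pieces is fixed, which terminates since dimension drops or the pattern stabilizes.

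The main obstacle, and the part deserving the most care, is the passage from ``each vertex has finite degree'' to ``degree is uniformly bounded after removing a small-measure set.'' In an o-minimal structure a definable family of finite sets need not have a uniform cardinality bound in general, but the fibers that are full-dimensional do: if $V_d$ had positive measure for infinitely many $d$, then $\sum_d \lambda(V_d) = \lambda(V) < \infty$ forces $\lambda(V_d) \to 0$, so for any $\epsilon$ only finitely many $d$ have $\lambda(V_d) \geq \epsilon/2^{?}$ — wait, more precisely, choose $D$ with $\sum_{d \geq D}\lambda(V_d) < \epsilon$ (possible since the tail of a convergent series of nonnegative reals goes to $0$, using that $\lambda(V) < \infty$ as $V$ is bounded), set $V' = \bigcup_{d < D} V_d$, which is definable since it equals $\{v : \deg(v) < D\}$, a definable condition. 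Then $\lambda(V \setminus V') < \epsilon$ and $\mathcal{G}' = (V', G \cap (V')^2)$ has degree $< D$. So this step is actually clean once one notes $\lambda(V)<\infty$; the remaining work is the definable bounded-degree coloring, where the real care goes into the cell-decomposition bookkeeping ensuring each color class is definable and that one does not need more than $D+1$ colors. I would expect the write-up to spend most of its length on that definability bookkeeping rather than on any single hard idea.
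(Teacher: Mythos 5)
Your outline correctly identifies the geometric framework (cell decomposition of $G$ into graphs of continuous ``neighbor maps'' over finitely many cells), but there is a genuine gap in the coloring step, which is exactly where the paper's main idea lives. You propose a greedy assignment of $D+1$ colors, processing cells in some order and avoiding colors forced by neighbors in earlier cells; you then flag the real difficulty --- two neighbors inside the same cell --- and propose to handle it by ``further refining the partition finitely many times\ldots which terminates since dimension drops or the pattern stabilizes.'' That does not work as stated: if $f$ is one of the neighbor maps on a cell $D$ and maps $D$ into $D$ (or into a cell processed simultaneously), no finite refinement of $D$ into sub-cells guarantees that $x$ and $f(x)$ land in different pieces, since pieces still have arbitrarily close boundary points, and there is no mechanism forcing dimension to drop or a ``pattern'' to stabilize. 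The missing idea is the one the paper uses: restrict to a compact subset $D_\delta \subseteq D$ (throwing away a small-measure collar near $\partial D$); then $x \mapsto d(x,f(x))$ is continuous and positive on the compact set $D_\delta$ (positivity comes from the antireflexivity of $G$), hence bounded below by some $r>0$; cover $D_\delta$ by finitely many balls of radius $r/2$ and color each point by the least index of a ball containing it. Two points with the same color lie in a common ball, hence are at distance $<r$, hence cannot be $f$-adjacent. Finally, the paper combines these per-function colorings by taking the tuple (or set) of all individual colors, rather than trying to economize to $D+1$ colors, which is simpler and all the lemma requires.

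A smaller point: you assert that ``in an o-minimal structure a definable family of finite sets need not have a uniform cardinality bound.'' This is false --- uniform finiteness is a basic consequence of cell decomposition in o-minimal structures --- so your measure-theoretic detour to obtain a uniform degree bound, while it happens to reach a correct conclusion, is solving a problem that does not exist. The paper simply notes that since each vertex has finite degree, over each $n$-dimensional cell $D$ the relation $G\cap(D\times\mathbb{R}^n)$ is a finite union of graphs of continuous definable functions; the bound on degree is then automatic and uniform from the finitely many cells and finitely many functions per cell, with no measure argument needed.
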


\begin{proof}
Let $\mathcal{C}$ be a decomposition of $\mathbb{R}^{2n}$ into cells partitioning $G$, and let \[\mathcal{D} = \{ \pi^{2n}_{n}C\colon C\in \mathcal{C} \, \& \, C\subseteq G \, \& \, \dim{C}=n \}.\] Then, because every vertex of $\mathcal{G}$ has finite degree, we may assume that for each $D \in \mathcal{D}$, $G\cap (D \times \mathbb{R}^n )$ is a finite disjoint union of graphs of definable, continuous functions.  Let $\mathcal{F}_{D}$ be the collection of these functions.

\begin{claim}
Let $\epsilon >0$, $D\in \mathcal{D}$ and $f\in \mathcal{F}_D$.  By $\mathcal{G}_{f}$ we denote the graph $(D\cup f(D), \Gamma f )$.  Then there is a definable coloring of $\mathcal{G}_{f}$ outside of a definable subset of $D\cup f(D)$ of $\lambda$-measure $<\epsilon$.  
\end{claim}
\begin{proof of claim}
We set  
\[D_{\delta} := \{ x\in D\colon d(x,\partial{D})\geq \delta \},\] where $\partial D :=\closure D \setminus \interior{D}$,
and $\delta >0$ is such that $\lambda (D\setminus D_{\delta})<\frac{\epsilon}{2}$ and $\lambda (f(D\setminus D_{\delta }))<\frac{\epsilon}{2}$ (the existence of such a $\delta$ follows from the boundedness of the vertex set).  
Define \[F\colon D_{\delta}\to \mathbb{R}^{\geq 0}\colon x\mapsto d(x,f(x)).\]  Then, because 
$\Gamma f|_{D_{\delta}} \subseteq G$,
$G$ is antireflexive, $f$ is continuous and $D_{\delta}$ is closed,  $F$ is bounded away from 0, say by $r>0$.  
Since $D_{\delta}$ is compact, we can find a finite covering $\mathcal{B}$ of $D_{\delta}$ by open balls of radius $\frac{r}{2}$.  For $x \in D_{\delta}$, define $c(x)=i$, where $i$ is the smallest index of a ball from $\mathcal{B}$ containing $x$.  If $x,y \in D_{\delta}$ are such that $c(x)=c(y)$, then $x,y \in B$ for some $B \in \mathcal{B}$, so $d(x,y)<r$ and $x,y$ cannot be neighbors.
\end{proof of claim}
Let $\epsilon >0$.  We shall now define $V'\subseteq V$ with $\lambda (V'\setminus V)<\epsilon$, and find a definable coloring of the graph $(V' , G\cap (V')^2)$.  

Let $|\mathcal{D}|=N$, and let $M$ be an upper bound for the degrees of the vertices of $\mathcal{G}$. 
Since vertices of degree 0 may be colored by any color, we may as well assume that $\mathcal{F}_D \not= \emptyset$ for each $D\in \mathcal{D}$.
For every $D\in \mathcal{D}$ and every $f\in \mathcal{F}_D$, use the claim to find a definable coloring $c_f$ of $\mathcal{G}_f$ outside of a definable $S_f \subseteq D\cup f(D)$ of $\lambda$-measure $<\frac{\epsilon }{MN}$.  We set $V':=V\setminus \bigcup_{D\in \mathcal{D}} \bigcup_{f\in \mathcal{F}_D} S_f$.  Note that \[\lambda (\bigcup_{D\in \mathcal{D}}\bigcup_{f\in \mathcal{F}_D} S_f) < M\cdot N \cdot \frac{\epsilon }{MN}=\epsilon.\]
Define a map $c$ with $\domain{c}=V'$ and range the power set of $\dot\bigcup \range{c_f}$, where the union is taken over all $D\in \mathcal{D}$ and all $f\in \mathcal{F}_D$ as follows.  For $x\in V'$ let $c(x)$ consist of all the $c_f (x)$ with $x\in \domain{c_f}$.  Suppose $x,y \in V'$ and $(x,y) \in G\cap (V')^2$.  Then $f(x)=y$ for some $f\in \mathcal{F}_D$ with $x \in D$. Hence $c_f (x) \not= c_f (y)$, and so $c(x)\not=c(y)$. It follows that $c$ is as required.
\end{proof}
Given a definable graph $\mathcal{G}=(V,G)$ and a measure on $V$, we shall say that $\mathcal{G}$ is {\em definably almost-colorable\/} if for every $\epsilon \in \mathbb{R}^{>0}$, there is $V_{\epsilon }\subseteq V$ with measure of $V\setminus V_{\epsilon}$ less than $\epsilon$,  and a definable coloring of $(V_{\epsilon },G\cap (V_{\epsilon})^2)$.
\begin{remark}
    Lemma \ref{colorings} fails for $R$ non-archimedean:  Let $\epsilon$ be a positive infinitesimal in $R$, and consider the graph $\mathcal{G}=([0,1]_R , G)$, where $(x,y) \in G$ iff $y=x+\epsilon$ and $0\leq x \leq 1-\epsilon$.  Then $\mathcal{G}$ is not definably almost colorable.

\end{remark}

\end{subsection}

\begin{subsection}{Matchings with few short augmenting paths}

Here, we let $\mathcal{G}=(A\dot\cup B,G)$ be a definable, $\lambda$-preserving, bipartite graph with $A,B\in B[n]$, and
$M\subseteq G$ a definable matching.   We fix $K\geq 0$.

We say that a finite set of subsets of a definable $X\subseteq R^n$ is an {\em open partition\/} of $X$, if each of its members is an open cell contained in $X$, and its union covers $X$ outside of a set of $\lambda$-measure 0.
We say that an open partition $\{ X_i \}$ of $X$ partitions a definable $Y\subseteq X$ if a subset of $\{ X_i \}$ constitutes an open partition of $Y$. An open partition $\{Y_j \}$ of $X$ is a refinement of another open partition $\{ X_i \}$ of $X$ if $\{ Y_j \}$ partitions each $X_i$.

\medskip\noindent
{\bf Generating sequences of paths.\/}
We shall define a sequence \[ \mathcal{A}_{0} , \mathcal{A}_2 , \dots , \mathcal{A}_{2K+2}\] of open partitions of $A$, and a sequence \[ \mathcal{B}_{1} , \mathcal{B}_3 , \dots ,\mathcal{B}_{2K+1}\] of open partitions of $B$.  In each sequence, every open partition will be a refinement of its predecessor.

Let $\mathcal{C}$ be a decomposition of $\mathbb{R}^{2n}$ into cells, which partitions both $G$ and $M$.  Set \[ \mathcal{A}_0 = \{ \pi^{2n}_{n}C \colon C\in \mathcal{C} \, \& \, \dim{\pi^{2n}_{n}C}=n  \, \& \, \pi^{2n}_{n}C\subseteq A \}. \]
Since $\mathcal{G}$ is $\lambda$-preserving, we may assume that for each $A_{0,i} \in \mathcal{A}_0$, $G|_{_{A_{0,i}}} := G \cap (A_{0,i} \times \mathbb{R}^n )$ is a finite union of graphs of $n$-isomorphisms.  We denote the set of these isomorphisms by $\mathcal{F}_{0,i}$.

Assuming that for each $A_{2m,i} \in \mathcal{A}_{2m}$, where $0\leq m \leq K$, $\mathcal{A}_{2m}$ and $\mathcal{F}_{2m,i}$ have already been defined, 
we let $\mathcal{B}_{2m+1}$ be an open partition of $B$ partitioning $f(A_{2m,i})$, for each $f\in \mathcal{F}_{2m ,i}$ and each $A_{2m,i} \in \mathcal{A}_{2m}$. For $B_{2m+1,j}\in  \mathcal{B}_{2m+1}$, let $\mathcal{F}_{2m+1,j}$ be the set of all $f^{-1}|_{B_{2m+1,j}}$, with $f\in \bigcup_{i} \mathcal{F}_{2m,i}$, where the sum is taken over all $i$ such that $A_{2m,i} \in \mathcal{A}_{2m}$,  and $B_{2m+1,j} \subseteq \range{f}$.

To define $\mathcal{A}_{2m+2}$, where $0\leq m\leq K$, assume that $\mathcal{B}_{2m+1}$ and $\mathcal{F}_{2m+1}$ have been defined.  Let $\mathcal{A}_{2m+2}$ be an open partition of $A$ which partitions $f^{-1}(B_{2m+1,j})$
for each $f^{-1}\in \mathcal{F}_{2m+1,j}$ and each $B_{2m+1,j} \in \mathcal{B}_{2m+1}$.

\medskip\noindent
Now set 
\begin{equation*}
    \mathcal{C}_i =
    \begin{cases*}
      \mathcal{A}_i & if $i$ is even \\
      \mathcal{B}_i       & if $i$ is odd.
    \end{cases*}
  \end{equation*}
Let $\mathcal{P}$ be the set of paths $p$ in $\mathcal{G}$ of length $l \leq 2K+1$ such that if $p_0 \in A$, then $p_i \in \bigcup \mathcal{C}_i$ for each $i=0,\dots ,l$, and if $p_0 \in B$, then $p_i \in \bigcup \mathcal{C}_{i+1}$ for each $i=0,\dots ,l$.

\begin{definition}
Given a path $p \in \mathcal{P}$ of length $l \leq 2K+1$, the {\em generating sequence\/} of $p$ is the unique sequence $(g_0 , \dots ,g_{l-1})$ of isomorphisms such that
\begin{enumerate}
\item if $p_0 \in \bigcup \mathcal{A}_0$, then each $g_i \in \mathcal{F}_{ i , j}$ for some $j$, and
\item if $p_0 \in \bigcup \mathcal{B}_1$, then each $g_i \in \mathcal{F}_{i+1,j}$ for some $j$, and 
\item $p_{i+1} = g_{i} (p_i ) $ for all $i \in \{0,\dots ,l-1 \}$.
\end{enumerate}
\end{definition}
Let $s$ be the generating sequence of a path $p\in \mathcal{P}$.  We denote by $\mathcal{S}_s$ the set of all possible starting vertices of paths in $\mathcal{P}$ with generating sequence $s$.  Note that $\mathcal{S}_s\subseteq A_{0,j}$ or $\mathcal{S}_s \subseteq B_{1,j}$ for some $j$, and that we may identify $\mathcal{P}$ with $\dot\bigcup_s \mathcal{S}_s$, where the disjoint union is taken over all generating sequences $s$ of paths in $\mathcal{P}$.



We now define a measure $\nu$ on the definable subsets $\mathcal{P}'$ of $\mathcal{P}$.
We have $\mathcal{P'}=\dot\bigcup_s \mathcal{S}'_s$, where $\mathcal{S}'_s \subseteq \mathcal{S}_s$ and $s$ ranges over the generating sequences of paths in $\mathcal{P}$.
We set
\[\nu (\mathcal{P'}):= 
\sum_{\mathclap{\substack{s}}}
 \lambda \mathcal{S}'_s ,\] so $\nu$ on $\mathcal{P}$ is just Lebesgue measure on $\dot\bigcup_s \mathcal{S}_s$.

Let $\mathcal{H}=(\mathcal{P},H)$ be the definable graph with vertex set $\mathcal{P}$ and $(p,q) \in H$ iff $p\not=q$ and $p_k = q_l$ for some $0\leq k,l \leq 2K+1$. 
Since every vertex of $\mathcal{H}$ has finite degree, by Lemma \ref{colorings} we obtain the following.
\begin{lemma}\label{almostcoloring}
The graph $\mathcal{H}$ is definably almost-colorable (with respect to $\nu$).
\end{lemma}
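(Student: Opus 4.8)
The plan is to reduce Lemma~\ref{almostcoloring} directly to Lemma~\ref{colorings}, so the only real content is to check that $\mathcal{H}=(\mathcal{P},H)$ satisfies the hypotheses of that lemma, namely that its vertex set is a bounded definable subset of some $\mathbb{R}^N$ and that every vertex has finite degree. First I would fix the identification $\mathcal{P}=\dot\bigcup_s \mathcal{S}_s$ already set up in the excerpt: there are only finitely many generating sequences $s$ (each $\mathcal{F}_{i,j}$ is finite and the partitions $\mathcal{C}_i$ are finite, and a generating sequence has length $\leq 2K+1$), and for each $s$ the set $\mathcal{S}_s$ is a definable subset of $A_{0,j}$ or $B_{1,j}$, hence bounded since $A,B\in B[n]$. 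Taking the finite disjoint union and relabelling into a single ambient $\mathbb{R}^N$ (for instance by shifting the $s$-th block into a box disjoint from the others, or simply coding a path by its tuple of $\leq 2K+2$ vertices in $\mathbb{R}^{(2K+2)n}$), we see $\mathcal{P}\in B[N]$ and $\nu$ corresponds to $\lambda$ on this set. So Lemma~\ref{colorings} applies verbatim once finite degree is established, and the conclusion it gives — a definable coloring outside a definable subset of arbitrarily small $\lambda$-measure — is exactly definable almost-colorability with respect to $\nu$.

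Next I would verify finite degree in $\mathcal{H}$. By definition $(p,q)\in H$ iff $p\neq q$ and some vertex of $p$ equals some vertex of $q$; so the neighbors of $p$ are exactly the paths in $\mathcal{P}$ that pass through one of the finitely many points $p_0,\dots,p_l$ with $l\leq 2K+1$. It therefore suffices to bound, for each point $v\in\bigcup_i \bigcup \mathcal{C}_i$, the number of paths in $\mathcal{P}$ through $v$. A path $p\in\mathcal{P}$ is determined by its starting vertex $p_0$ together with its generating sequence $s=(g_0,\dots,g_{l-1})$, since $p_{i+1}=g_i(p_i)$; and conversely, given which coordinate $k$ of $p$ equals $v$ and given $s$, the earlier vertices are recovered by applying the inverses $g_{k-1}^{-1},\dots,g_0^{-1}$ and the later ones by applying $g_k,\dots,g_{l-1}$ — each $g_i$ being a bijection (an $n$-isomorphism, in particular injective) and similarly on the $\mathcal{B}$-side. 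Since there are finitely many generating sequences $s$ and, for each, finitely many choices of the index $k\leq 2K+1$, only finitely many paths in $\mathcal{P}$ pass through any fixed $v$; multiplying by the bound $2K+2$ on the number of vertices of $p$ shows $\deg_{\mathcal{H}}(p)$ is bounded uniformly, in particular finite.

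I would then simply invoke Lemma~\ref{colorings} with $\mathcal{G}$ there taken to be $\mathcal{H}$ and conclude. The main obstacle — and it is a bookkeeping obstacle rather than a conceptual one — is the identification of $\mathcal{P}$ and of the graph $\mathcal{H}$ with an honest definable graph on a bounded definable subset of a single Euclidean space, so that "definable" and "$\lambda$-measure" in Lemma~\ref{colorings} translate correctly into "definable" and "$\nu$-measure" here; once one is careful that the finitely many blocks $\mathcal{S}_s$ are packaged definably and that $\nu$ is by construction Lebesgue measure on this package, there is nothing further to do. One should also note in passing that paths of length strictly less than $2K+1$ are handled uniformly by the same argument (a shorter generating sequence is still one of finitely many finite tuples), so no separate case analysis on the length is needed.
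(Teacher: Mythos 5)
Your proposal is correct and takes the same route as the paper, which disposes of this lemma in a single sentence by noting that every vertex of $\mathcal{H}$ has finite degree and invoking Lemma~\ref{colorings}; your argument simply spells out the two facts left implicit there (that $\mathcal{P}$ can be realized as a bounded definable set so that $\nu$ becomes $\lambda$, and the finite-degree bound via the finitely many generating sequences and the invertibility of the $g_i$), and both of those fills are sound.

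One small caution: of your two suggested codings of $\mathcal{P}$ into a Euclidean space, only the first one (placing the finitely many blocks $\mathcal{S}_s$ into pairwise disjoint translates in $\mathbb{R}^n$) makes $\nu$ literally agree with $\lambda$ as required by Lemma~\ref{colorings}. Coding a path by its full tuple of vertices in $\mathbb{R}^{(2K+2)n}$ produces an $n$-dimensional (hence $\lambda_{(2K+2)n}$-null) set, so the measures would not match under that identification; it would still give a definable graph but would need a different measure than the ambient Lebesgue measure, so that alternative should be dropped or adjusted (e.g.\ project onto the first $n$ coordinates, which is essentially the starting-vertex identification again).
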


\medskip\noindent
{\bf Augmenting paths.\/} Our aim now is to define a matching $M'$ which covers the vertices covered by $M$, but which has only few "short" augmenting paths.

We shall use the following terminology.  If $p$ is an augmenting path of length $l$ for a matching $\mathcal{M}$ (hence $p_0$ and $p_{l}$ are not covered by $\mathcal{M}$ and each $(p_{2i},p_{2i+1}) \not\in \mathcal{M}$ and each $(p_{2i+1},p_{2i+2}) \in \mathcal{M}$), then by {\em flipping $p$\/} we mean removing the edges $(p_{2i+1},p_{2i+2})$ from $\mathcal{M}$ and instead placing the edges $(p_{2i},p_{2i+1})$ into $\mathcal{M}$. Note that flipping an augmenting path results in a new matching, which covers the same vertices as $\mathcal{M}$ (and two additional ones).
\begin{proposition}\label{shortaps}
Let $\delta \in \mathbb{R}^{> 0}$.  There is a definable matching $M'\subseteq G$ covering the vertices covered by $M$, and not having any augmenting paths of length $\leq 2K+1$ outside of a definable subset of $\mathcal{P}$ of $\nu$-measure $< \delta$.
\end{proposition}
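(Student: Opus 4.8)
The plan is to adapt the Elek--Lippner argument on matchings with few short augmenting paths to the present definable, measured setting, using the generating-sequence machinery and the almost-coloring of $\mathcal{H}$ from Lemma~\ref{almostcoloring}. First I would fix a definable almost-coloring $c\colon \mathcal{P}_0 \to \{1,\dots,t\}$ of $\mathcal{H}$, where $\mathcal{P}_0 \subseteq \mathcal{P}$ has $\nu(\mathcal{P}\setminus \mathcal{P}_0)$ as small as we like (say $<\delta$), so that any two distinct paths in $\mathcal{P}_0$ sharing a vertex receive different colors. The point of the coloring is that within a single color class the augmenting paths are vertex-disjoint, hence can be flipped simultaneously without interference.

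Next I would run a finite round-robin process over the color classes. Start with $M_0 := M$. Having built $M_j$, let $P_j \subseteq \mathcal{P}_0$ be the set of $p$ with $c(p)=j+1$ that are augmenting paths of length $\le 2K+1$ for $M_j$; since the color class is $\mathcal{H}$-independent, the paths in $P_j$ are pairwise vertex-disjoint, so flipping all of them at once yields a well-defined definable matching $M_{j+1}$. Each flip preserves the vertices covered by $M_j$ (and covers two more), so $M_{j+1}$ still covers all vertices covered by $M$. After $t$ rounds set $M' := M_t$. Everything here is definable because the coloring, the partitions $\mathcal{C}_i$, the isomorphisms in the generating sequences, and the flipping operation are all given by finitely many definable pieces; uniform finiteness of degrees (hence of the relevant index sets) is what keeps $t$ finite and the construction first-order.

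The remaining and main point is the measure estimate: $M'$ should have augmenting paths of length $\le 2K+1$ only on a $\nu$-small subset of $\mathcal{P}$. Here is the mechanism. If $p\in \mathcal{P}_0$ has $c(p)=j+1$ and $p$ is still an augmenting path for $M'$, then in particular $p$ was augmenting for $M_j$, so $p\in P_j$ and $p$ was flipped at stage $j$ — but flipping $p$ puts the edge $(p_0,p_1)$ into the matching, so $p_0$ is covered by $M_{j+1}$, and since later stages never uncover a vertex, $p_0$ is covered by $M'$, contradicting that $p$ is augmenting for $M'$. Thus no $p\in \mathcal{P}_0$ is an augmenting path of length $\le 2K+1$ for $M'$ at all; the only augmenting paths of that length lie in $\mathcal{P}\setminus\mathcal{P}_0$, which has $\nu$-measure $<\delta$. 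I expect the genuinely delicate part to be bookkeeping the definability and the ``no vertex gets uncovered'' invariant across the simultaneous flips — one must check that flipping an $\mathcal{H}$-independent family really does produce a matching and that a vertex covered at stage $j$ remains covered, for which the vertex-disjointness from the coloring, together with the fact that an augmenting path only touches currently-matched edges along its interior, is exactly what is needed.
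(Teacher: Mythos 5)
The core of your argument contains a genuine gap, and it is precisely the place where the Elek--Lippner approach requires more work than a single pass through the color classes. You claim: \emph{if $p\in\mathcal{P}_0$ with $c(p)=j+1$ is an augmenting path of length $\le 2K+1$ for $M'=M_t$, then in particular $p$ was augmenting for $M_j$.} This inference is false. Whether a given path $p$ is augmenting depends on the \emph{current} matching: its interior edges must alternate correctly between unmatched and matched, and its endpoints must be uncovered. Flipping paths of other colors at stages $j' \neq j$ changes the matching along edges that $p$ may share with those paths (sharing an endpoint is forbidden by the coloring, but $p$ can share edges and interior vertices with paths of other colors in $\mathcal{H}$ only via common vertices, and more to the point, flips elsewhere change which edges are matched). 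So a path that was \emph{not} augmenting when its color was processed at stage $j$ can become augmenting later and remain so in $M'$. Your argument handles only the case where $p$ was flipped at stage $j$, and silently assumes this is the only case; it is not. In fact, the conclusion you are aiming for --- that \emph{no} path in $\mathcal{P}_0$ is augmenting for $M'$ --- is strictly stronger than what the proposition asserts and than what the construction can deliver.

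The paper's proof runs the round-robin \emph{repeatedly}: the flipping sequence follows $a_k = k \bmod C$ for $k=0,1,2,\dots$, cycling through all colors indefinitely, not once. One then never expects to eliminate all short augmenting paths; instead one shows that for each color $i$ there is a stage $k_i$ past which the $\nu$-measure of augmenting paths of color $i$ is $< \delta/(2C)$. The key quantitative input, which your proposal lacks, is a bound on how many times any fixed edge can change its membership in the matching: each flip of an edge $(a,b)$ is tied to a strict increase in the number of covered vertices in the finite ball $R_a$ of radius $2K+1$ around $a$, and definability gives a \emph{uniform} bound $N$ (e.g.\ $d^{2K+1}$) on the number of such flips. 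If a color class had augmenting-path measure $\ge \epsilon$ at infinitely many of its turns, the total ``flip mass'' would exceed $\nu(c^{-1}(i))\cdot N$, a contradiction. Taking $k = \max_i k_i$ and adding the $\delta/2$ lost to $\mathcal{P}\setminus\mathcal{P}'$ gives the stated bound $<\delta$. You would need to replace your single-pass argument with this iterated process and supply the edge-flip counting bound to close the gap.
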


\begin{proof}
Since $\mathcal{G}$ is $\lambda$-preserving, $\dim{A}=\dim{B}$, and we may assume that $\dim{A}=n$.
By Lemma \ref{almostcoloring}, we can find a definable $\mathcal{P}' \subseteq \mathcal{P}$ with $\nu(\mathcal{P} \setminus \mathcal{P}' )<\frac{\delta}{2}$ and a definable coloring $c$ of the graph $(\mathcal{P}' , H\cap \mathcal{P}'^2 )$ with $\range{c} = \{0,1, \dots ,C-1 \}$ for some $C\in \mathbb{N}$.  
Let $a=(a_k )$ be the sequence with $a_k = (k \mod C)$ for each $k\in \mathbb{N}$. 

We shall obtain the desired matching $M'$ as a member of a sequence $M_0 , M_1 , M_2 , \dots$ of definable matchings.  We set $M_0 := M$.  
To obtain $M_{k+1}$ from $M_{k}$, $k\geq 0$, flip all augmenting paths for $M_k$ that are contained in $c^{-1}(a_{k})$.
Given that we are only flipping paths with the same color, in each step we indeed obtain a new matching.  Note that each $M_{k+1}$ covers the vertices covered by $M_k$.
It now suffices to establish the claim below.
\begin{claim} There is $k$ such that $M_k$ has no augmenting paths of length $\leq 2K+1$ outside of a definable subset of $\mathcal{P}$ of $\nu$-measure $<\delta$.  
\end{claim}
\begin{proof of claim}
First note that an edge $(a,b)\in G$ can flip belonging to $M_k$ for only finitely many $k$'s.  This follows by an argument as in \cite{wang}, p.12: Set \[R_a := \{x\colon x \mbox{ is reachable from $a$ in } \leq 2K+1 \mbox{ many steps} \}.\] Then $b\in R_a$, and $R_a$ is finite because every vertex of $\mathcal{G}$ has finite degree.  Every time we flip $(a,b)$, this happens because $(a,b)$ is part of an augmenting path of length $\leq 2K+1$ whose flipping results in an increase of the covered vertices of $R_a$.  But given the finiteness of $R_a$, this can only happen finitely many times.

Also note that since $\mathcal{G}$ is definable, there is a uniform bound, say $N$, on the number of times an edge can flip (take, for instance, $N=d^{2K+1}$, where $d$ is an upper bound on the degrees in $\mathcal{G}$).

It suffices to show that for $i \in \{0,\dots , C-1 \}$ there is $k_i$
such that for every $m\geq k_i$, $c^{-1}(i)$ contains no augmenting paths of length $\leq 2K+1$ for $M_{i+mC}$ outside of a definable subset of $\mathcal{P}$ of $\nu$-measure $<\frac{\delta}{2C}$.

Let $AP_{i+mC} \subseteq \mathcal{P}$ be the set of augmenting paths for $M_{i+mC}$ in $c^{-1}(i)$, and assume to the contrary that there are arbitrarily large $k$ such that $\nu (AP_{i+kC})\geq \epsilon$.  To create $M_{i+kC+1}$, the paths in $AP_{i+mC}$ are flipped, but  because each edge flips at most $N$ times, this can only happen at most $\frac{\nu (c^{-1}(i))}{\epsilon}\cdot N$ times, a contradiction.
\end{proof of claim}
Set $k:=\max \{k_0 , \dots ,k_{C-1}\}$.  Then the set of augmenting paths for $M_k$ of length $\leq 2K+1$ has measure $< \frac{\delta}{2}+C\cdot \frac{\delta}{2C}=\delta$.

\end{proof}

\end{subsection}

\end{section}

\begin{section}{Reduction to the archimedean case}
In this section, we drop the assumption that the underlying set of $R$ is $\mathbb{R}$.  Instead, we assume that $R$ is $(2^{\aleph_0})^{+}$-saturated.  We will use the following definitions and lemma from \cite{st}.

By a {\em $\mathbb{Q}$-ball\/} in $R^n$ we mean an open ball with rational radius, i.e. a ball of the form
\[B_{r }(x) = \{ y\in R^n \colon d(x,y)<r ,\; x\in R^n ,\; r \in \mathbb{Q}^{>0}\}.\] The lemma below is Lemma 4.1, p. 183 in \cite{st} (here, we prefer to state it in terms of $\mathbb{Q}$-balls rather than $\mathbb{Q}$-boxes). 

\begin{lemma}[\cite{st}]\label{Qbox}
Suppose $X\subseteq R^n$ is definable and $\dim( \st{X} ) =n$.  Then $X$ contains a $\mathbb{Q}$-ball. 
\end{lemma}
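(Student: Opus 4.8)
\textbf{Proof proposal for Lemma \ref{Qbox}.}

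The plan is to reduce the statement to a one-dimensional fact about the standard part map on $R$, using cell decomposition in $R$ together with the hypothesis $\dim(\st X) = n$. First I would choose a cell decomposition of $R^n$ compatible with $X$, and note that since $\st$ commutes with projections and takes definable sets of dimension $<n$ to sets of dimension $<n$ in $\mathbb{R}_{ind}$ (the map $\st$ cannot raise dimension, which follows from o-minimality of $\mathbb{R}_{ind}$), there must be an open cell $C \subseteq X$ with $\dim(\st C) = n$. So it suffices to find a $\mathbb{Q}$-ball inside an open cell $C$ with $\dim(\st C)=n$.

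The core step is an induction on $n$. For $n=1$, $C$ is an open interval $(a,b)\subseteq R$; since $\dim(\st C)=1$, $\st C$ contains a nonempty open real interval, so we can pick a rational $q$ and a rational $r>0$ with $[q-r,q+r]\subseteq \st C$; by saturation and the definition of $\st$ one checks $(q-r,q+r)\subseteq (a,b)$, giving a $\mathbb{Q}$-ball. For the inductive step, write $C = (f,g)_D := \{(x,t) : x\in D,\ f(x)<t<g(x)\}$ with $D\subseteq R^{n-1}$ an open cell and $f<g$ continuous definable. Since $\dim(\st C)=n$, one shows $\dim(\st D)=n-1$ and moreover that there is a point in $D$ where the "standard part gap" $\st g - \st f$ is bounded below by a positive real on a set whose standard part is $(n-1)$-dimensional — concretely, the set $D' = \{x\in D : g(x)-f(x) > r\}$ for a suitable rational $r>0$ has $\dim(\st D')=n-1$, because if every such $D'$ had lower-dimensional standard part then $\st C$ would be covered by countably many lower-dimensional sets, contradicting $\dim(\st C)=n$ and the o-minimality of $\mathbb{R}_{ind}$ (a definable set in an o-minimal structure is not a countable union of definable sets of smaller dimension). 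Apply the inductive hypothesis to $D'$ (after shrinking to an open subcell of full standard-part dimension) to get a $\mathbb{Q}$-ball $B_\rho(y)\subseteq D'$; then pick a rational value $q$ strictly between $\sup_{B_\rho(y)} \st f$ and that quantity plus $r$, and check that $B_{\min(\rho,r/3)}(y,q)\subseteq C$ using $|f|, |g|$ comparisons and $f(x)<q-r/3$, $g(x)>q+r/3$ on the ball. This produces a $\mathbb{Q}$-ball in $C$, hence in $X$.

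The main obstacle I expect is the passage at the inductive step from "$\dim(\st C)=n$" to "some $D_r' = \{g-f>r\}$ has $\dim(\st D_r')=n-1$ with a uniform real gap." One has to be careful that $\st C$ is a subset of $\mathbb{R}^n$ definable in $\mathbb{R}_{ind}$, invoke that it is not a countable union of sets of dimension $<n$, and relate the fiberwise length of $\st C$ over a point of $\st D$ to the real number $\st g(x) - \st f(x)$ — this requires knowing that $\st$ interacts well with the cell structure, e.g. that $\st$ of the fiber $C_x$ equals (up to boundary) the real interval $(\st f(x), \st g(x))$ when these standard parts exist. If $f(x)$ or $g(x)$ is not in $\mathcal{O}$ one truncates to a bounded sub-box first; since the conclusion is local this is harmless. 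Once the gap set $D_r'$ is identified, the rest is the routine ball-inside-a-cell estimate and a compactness/saturation argument to clear the boundary.
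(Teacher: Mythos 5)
The paper does not prove this lemma; it cites it as Lemma~4.1 of \cite{st}, so there is no in-paper proof to compare against. Evaluating your attempt on its own merits, the high-level plan (reduce to an open cell, induct on $n$ over a cell $(f,g)_D$, locate a large-gap subset $D'_r$ of the base, recurse, then drop a ball into the fiber) is reasonable, but there are genuine gaps in the execution, and the argument as written does not close.

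\textbf{Base case.} From $[q-r,q+r]\subseteq \st C$ you cannot conclude $(q-r,q+r)\subseteq(a,b)$. Take $a=q-r+\varepsilon$, $b=q+r-\varepsilon$ with $\varepsilon>0$ infinitesimal: then $\st C=[q-r,q+r]$, yet $(q-r,q+r)\not\subseteq(a,b)$. The fix is easy (take $[q-2r,q+2r]\subseteq\st C$, which forces $a<q-r<q+r<b$), but as stated the step is false.

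\textbf{The countable-union step.} You claim that if every $D'_r=\{x\in D:g(x)-f(x)>r\}$ had $\dim(\st D'_r)<n-1$, then $\st C$ would be covered by countably many lower-dimensional definable sets. The covering $\st C=\bigcup_r \st\bigl(C\cap(D'_r\times R)\bigr)\cup\st C''$ has a leftover piece $C''=\{(x,t)\in C: g(x)-f(x)\ \text{infinitesimal}\}$, which is not definable in $R$ and which you do not address. It \emph{can} be handled — for $(x,t)\in C''$ one has $\st t=\st f(x)$, hence $\st C''\subseteq\st(\Gamma f)$, which has dimension $\leq n-1$ by the same non-raising-of-dimension fact you invoke — but this observation is missing, and without it the contradiction with $\dim(\st C)=n$ is not justified.

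\textbf{The ball placement.} This is the serious gap. After obtaining a $\mathbb{Q}$-ball $B_\rho(y)\subseteq D'_r$, you pick a rational $q$ strictly between $S:=\sup_{x\in B_\rho(y)}\st f(x)$ and $S+r$, and claim $f(x)<q-r/3$ and $g(x)>q+r/3$ on a sub-ball. This does not follow. The lower bound $g(x)>f(x)+r$ is relative to $f(x)$, not to $S$: if $\st f$ ranges over, say, $[0,10]$ on the ball and $r=1$, then $q\in(10,11)$, yet at a point with $\st f(x)=0$ one may have $g(x)\approx 1$, so $g(x)<q$. The construction would need $\st f$ to vary by less than $r/3$ on the ball, but there is no a priori control: two points $x_0,x_1$ with $\st x_0=\st x_1$ can have $\st f(x_0)\neq\st f(x_1)$ (consider $f(x)=x/\varepsilon$ with $\varepsilon$ infinitesimal), so ``$\st f$'' is not even a function of $\st x$ before applying structure theory of the kind in Theorem~\ref{indfunct}. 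And even granting that input, shrinking to a set where the induced function varies by $<r/3$ produces a subset of $\mathbb{R}^{n-1}$, not a $\mathbb{Q}$-ball in $R^{n-1}$, so you would need to pull back and re-enter the induction (via something like Proposition~\ref{CandI}), which you have not done.

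\textbf{A shorter route.} Given the black box you already invoke ($\st$ does not raise dimension of definable sets, and $\mathbb{R}_{ind}$ is o-minimal), one can avoid the induction on $n$ and the cell fibration entirely. Reduce to $X$ open as you do. Let $\rho(x)=d(x,R^n\setminus X)$ and $X_s=\{x\in X:\rho(x)>s\}$ for rational $s>0$. For $x\in X\cap\mathcal{O}^n$ with $\rho(x)$ infinitesimal, $\st x\in\st(\partial X)$, which has dimension $<n$; hence $\st X\setminus\bigcup_s\st X_s$ has dimension $<n$, so some $\st X_s$ has dimension $n$ and in particular is nonempty. Any $y\in X_s$ gives $B_s(y)\subseteq X$, a $\mathbb{Q}$-ball (only the radius needs to be rational). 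This sidesteps all three gaps above.
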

The following definition (Definition 3.1, p.179, \cite{st}) and theorem 
(a slightly weaker version of Corollary 6.2, p.191, in \cite{st}) will be crucial.
\begin{definition}[\cite{st}]
Given functions $f \colon X \to R$, $X \subseteq  R^n$,
and $F \colon Y \to \mathbb{R}$
with $Y \subseteq \mathbb{R}^n$, we say that $f$ induces $F$ if $f$ is definable (so $X$ is definable),
$Y^h \subseteq X$, $f|C^h$
is continuous, $f(C^h) \subseteq \mathcal{O}$ and $\Gamma F = \st(\Gamma f) \cap (Y \times \mathbb{R})$.

For $f\colon X \to R^n$ and $F\colon Y \to \mathbb{R}^n$, where $X\subseteq R^n$ and $Y \subseteq \mathbb{R}^n$, we say that $f$ induces $F$ if the coordinate functions of $f$ induce the corresponding coordinate functions of $F$.
\end{definition}
\begin{theorem}[\cite{st}]\label{indfunct}
If $f\colon Y\to R$, where $Y\subseteq R^n$ and $\Gamma f \subseteq \mathcal{O}^{n+1}$, is definable, then there is a decomposition $\mathcal{C}$ of $\mathbb{R}^{n}$ into cells that partitions $\st Y$ and such that if $C\in \mathcal{C}$ is open and $C\subseteq \st Y$, then $f$ is continuously differentiable on an open $X\subseteq Y$ containing $\st^{-1}(C)$ and $f,\frac{\partial f}{\partial x_1},\dots ,\frac{\partial f}{\partial x_n}$, as functions on $X$, induce functions $g,g_1 , \dots ,g_n \colon C\to \mathbb{R}$ such that $g$ is $C^1$ and $g_i = \frac{\partial g}{\partial x_i}$ for each $i$. 
\end{theorem}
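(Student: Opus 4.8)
\emph{Proof plan.} This is essentially Corollary 6.2 of \cite{st}; I sketch the argument. First I would fix a $C^2$-cell decomposition $\mathcal{D}$ of $R^n$ partitioning $Y$, refined further (in $R$) so that on each open cell each first and second partial derivative of $f$ is monotone in every variable; since $\Gamma f\subseteq\mathcal{O}^{n+1}$ we have $Y\subseteq\mathcal{O}^n$, so all cells of $\mathcal{D}$ lie in $\mathcal{O}^n$. Then I would take a cell decomposition $\mathcal{C}$ of $\mathbb{R}^n$, formed inside the o-minimal structure $\mathbb{R}_{ind}$ (o-minimality of $\mathbb{R}_{ind}$ is from \cite{st},\cite{thesispaper}), that partitions $\st Y$ and is compatible with the finitely many $\mathbb{R}_{ind}$-definable sets $\st(\mathcal{O}^n\setminus Y)$, $\st D'$ and $\st(\partial D')$ for $D'$ a cell of $\mathcal{D}$, and the (finitely many) decompositions witnessing that each $\frac{\partial f}{\partial x_i}$ induces a continuous function, a basic property of induced functions established in \cite{st}. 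The claim is that this $\mathcal{C}$ works, and the verification rests on two elementary facts about the standard part: (a) an infinitesimal segment joining a point of a definable set $Z$ to a point of $\mathcal{O}^n\setminus Z$ must cross $\partial Z$, so $\st Z\cap\st(\mathcal{O}^n\setminus Z)\subseteq\st(\partial Z)$, which has dimension $<n$; and (b) $\dim\st X\le\dim X$ for definable $X$.

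So fix an open cell $C\in\mathcal{C}$ with $C\subseteq\st Y$; then $\dim C=n$, so $C$ is disjoint from each of the dimension-$<n$ sets $\mathcal{C}$ is compatible with. By (a) applied to $Z=Y$ we get $C\cap\st(\mathcal{O}^n\setminus Y)=\emptyset$, hence $\st^{-1}(C)\subseteq Y$ (if $\st x\in C$ and $x\in\mathcal{O}^n$ then $x\notin\mathcal{O}^n\setminus Y$); this also shows that for the implication in the theorem to be non-vacuous one needs $\dim(\st Y\cap\st(\mathcal{O}^n\setminus Y))<n$, which is exactly (a). By (b) and compatibility with the $\st D'$, $\st^{-1}(C)$ meets no non-open cell of $\mathcal{D}$, and any open cell of $\mathcal{D}$ it meets lies in $Y$ (it meets $Y$). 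Let $X$ be the union of the open cells of $\mathcal{D}$ meeting $\st^{-1}(C)$: an open subset of $Y$ containing $\st^{-1}(C)$, on which $f$ is $C^2$.

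The crux is then to show each $\frac{\partial f}{\partial x_i}$ is $\mathcal{O}$-valued on $\st^{-1}(C)$. The key lemma is: if $h$ is $C^1$ on an open definable $U\subseteq\mathcal{O}^n$ with $\range h\subseteq\mathcal{O}$, then $\{x\in U:\frac{\partial h}{\partial x_i}(x)\notin\mathcal{O}\}$ contains no ball of positive rational radius --- for across such a ball the mean value theorem in $R$ in the $e_i$-direction produces two values of $h$ whose difference is a non-infinitesimal multiple of an infinite element, hence itself infinite, contradicting $\range h\subseteq\mathcal{O}$. Apply this to $f$ on each open cell $D'$ of $\mathcal{D}$ meeting $\st^{-1}(C)$ (where $\range f\subseteq\mathcal{O}$): if $\frac{\partial f}{\partial x_i}$ were infinite at some $x_0$ with $c_0:=\st x_0\notin\st(\partial D')$, then the monad of $c_0$ misses $\partial D'$ and, being connected and meeting the open cell $D'$, lies in $D'$; hence (o-minimality) $D'\supseteq B_r(x_0)$ for some positive rational $r$, and by the coordinatewise monotonicity of $\frac{\partial f}{\partial x_i}$ on $D'$ it is then infinite on an orthant-corner of $B_r(x_0)$, hence on a ball of positive rational radius --- contradiction. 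Thus $\{x\in D':\frac{\partial f}{\partial x_i}(x)\notin\mathcal{O}\}$ has standard part inside $\st(\partial D')$, which $C$ avoids, and so $\frac{\partial f}{\partial x_i}(\st^{-1}(C))\subseteq\mathcal{O}$ for every $i$; the same lemma run on each $D'$ for $\frac{\partial f}{\partial x_i}$ over the ($\mathcal{O}$-valued) part of $\st^{-1}(C)$ likewise makes the second partials finite there, which streamlines the next step.

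It remains to transfer differentiability. For $x,x'\in\st^{-1}(C)$ with $\st x=\st x'$, the iterated one-variable mean value theorem in $R$ writes $f(x)-f(x')$ as a sum of products (finite partial)$\times$(infinitesimal coordinate difference), so $f(x)\equiv f(x')\pmod{\ma}$, and the same computation applies to each $\frac{\partial f}{\partial x_i}$ using finiteness of the second partials; hence $f$ and the $\frac{\partial f}{\partial x_i}$, viewed on $X$, induce functions $g,g_1,\dots,g_n\colon C\to\mathbb{R}$, each $g_i$ being continuous by the basic induced-function facts from \cite{st}. Finally, for $c\in C$ and small real $h\neq0$ with $[c,c+he_i]\subseteq C$ (possible since $C$ is open), lift $c$ to $\tilde c\in\st^{-1}(c)$ and pick $\tilde h\in R$ with $\st\tilde h=h$; the segment $[\tilde c,\tilde c+\tilde h e_i]$ lies in $\st^{-1}(C)\subseteq X$, so the mean value theorem in $R$ gives $\frac{f(\tilde c+\tilde h e_i)-f(\tilde c)}{\tilde h}=\frac{\partial f}{\partial x_i}(\xi)$ for some $\xi$ on it, whence $\frac{g(c+he_i)-g(c)}{h}=g_i(\st\xi)$; letting $h\to0$ and using continuity of $g_i$ yields $\frac{\partial g}{\partial x_i}=g_i$, so $g$ is $C^1$ with the prescribed partials. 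The main obstacle is the third paragraph: locating precisely where a partial derivative of $f$ becomes infinite and showing this locus has $\st$-image that is $\mathbb{R}_{ind}$-definable and of dimension $<n$, so that it can be made to miss the open cells $C$ --- it is here that the boundedness hypothesis $\Gamma f\subseteq\mathcal{O}^{n+1}$, the monotonicity refinement of $\mathcal{D}$, and Lemma \ref{Qbox} are all essential; the rest is a routine mean-value-theorem transfer of differentiability from $R$ to $\mathbb{R}$.
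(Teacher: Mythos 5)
The paper does not actually prove Theorem~\ref{indfunct}: it is imported (in a slightly weakened form) from Corollary~6.2 of \cite{st}, and the text immediately preceding the statement says as much. So there is no internal proof to compare your sketch against; what can be assessed is whether the sketch is a sound route to the cited result.

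As a reconstruction it is plausible and assembles the right ingredients: a $C^2$ cell decomposition in $R$ with coordinatewise monotone partials, an $\mathbb{R}_{ind}$-cell decomposition of $\mathbb{R}^n$ compatible with the standard parts $\st D'$, $\st(\partial D')$, $\st(\mathcal{O}^n\setminus Y)$, the dimension facts $\st Z\cap\st(\mathcal{O}^n\setminus Z)\subseteq\st(\partial Z)$ and $\dim\st X\le\dim X$ to force open cells $C\subseteq\st Y$ to miss the bad loci, Lemma~\ref{Qbox} together with the mean value theorem to rule out infinite partials over a rational ball, and a second mean value pass to transfer differentiability and identify $\partial g/\partial x_i$ with $g_i$. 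Each of these steps checks out.

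One point worth tightening. In your first paragraph you ask $\mathcal{C}$ also to refine ``the decompositions witnessing that each $\frac{\partial f}{\partial x_i}$ induces a continuous function.'' At that stage of the argument you have not yet shown that $\frac{\partial f}{\partial x_i}$ is $\mathcal{O}$-valued anywhere, so those decompositions are not yet available, and in any case you do not need them: once your third paragraph gives finiteness of the first \emph{and second} partials on $\st^{-1}(C)$, the mean value theorem makes the induced $g_i$ locally Lipschitz (with standard constant $\st L$ where $L$ bounds the second partials), hence continuous, without appealing to a separate induced-continuity lemma. Dropping that extra compatibility requirement both removes the apparent circularity and shortens the argument. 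With that adjustment the sketch reads as a reasonable self-contained path to the statement, along the same general lines (standard-part dimension control plus mean value estimates) as the source in \cite{st}.
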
 
Furthermore, we shall use the following results from \cite{thesispaper}, where 1. is Proposition 5.1, and 2. is extracted from the proof of Lemma 2.15.
\begin{proposition}\label{CandI}
 \begin{enumerate}
     \item If $C\in \Def^n (\mathbb{R}_{ind})$ is closed, then there is a $Z\in \Def^n (R)$ such that $\st Z=\closure{C}$.
     \item If $X,Y \in Def^n (R)$ and $X,Y\subseteq \mathcal{O}^n$, then there is $\epsilon >0$ such that $\st X \cap \st Y = \st (X\cap Y^{\epsilon})$, where $Y^{\epsilon } = \{x\in R^n \colon d(x,Y)<\epsilon \}$.
 \end{enumerate}
\end{proposition}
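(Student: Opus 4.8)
I would treat the two items separately: Part 2 admits a short argument from the saturation of $R$, while Part 1 is really a statement about the structure $\mathbb{R}_{ind}$, which I would derive from the o-minimality of $\mathbb{R}_{ind}$ together with the induced-function machinery behind Theorem \ref{indfunct}.

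For Part 2, one inclusion is essentially automatic for every positive infinitesimal $\epsilon$ (i.e. $\epsilon \in \ma$, $\epsilon > 0$): then $\st (Y^{\epsilon}) = \st Y$, since any point within infinitesimal distance of $Y$ has the same standard part as some point of $Y$; hence $\st (X \cap Y^{\epsilon}) \subseteq \st X \cap \st (Y^{\epsilon}) = \st X \cap \st Y$. For the reverse inclusion I would first observe that $\st X \cap \st Y = \st W$, where $W := \{ x \in X : d(x,Y) \in \ma \}$: if $a = \st x = \st y$ with $x \in X$ and $y \in Y$, then $d(x,Y) \leq d(x,y) \in \ma$, so $x \in W$; the other containment is clear. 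Now for each $a \in \st W$ I pick a representative $x_a \in W$ with $\st x_a = a$, so that $d(x_a, Y) \in \ma$. The family $\{ d(x_a, Y) : a \in \st X \cap \st Y \}$ consists of positive infinitesimals, is indexed by at most $2^{\aleph_0}$ parameters, and every finite subfamily has a positive infinitesimal upper bound; so by $(2^{\aleph_0})^{+}$-saturation there is a positive infinitesimal $\epsilon$ with $\epsilon > d(x_a, Y)$ for all $a$. For this $\epsilon$ each $x_a$ lies in $X \cap Y^{\epsilon}$ and has standard part $a$, so $\st X \cap \st Y = \st W \subseteq \st (X \cap Y^{\epsilon})$, which together with the first inclusion (valid since $\epsilon$ is infinitesimal) gives equality. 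The one point to watch is that the choice of $x_a$ must be made pointwise in $a$: the whole set $\{ d(x,Y) : x \in W \}$ typically has no infinitesimal upper bound (take $X = (0,1)$, $Y = \{0,1\}$ in $R$), so a uniform bound is not available; beyond this, saturation does all the work.

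For Part 1, I would use that $\mathbb{R}_{ind}$ is o-minimal. Let $\mathcal{S}^n := \{ \st Z : Z \in \Def^n (R) \}$ and $\mathcal{S} := \bigcup_n \mathcal{S}^n$. Using $\pi (\st Z) = \st (\pi Z)$, $\st Z_1 \cup \st Z_2 = \st (Z_1 \cup Z_2)$, $\st Z_1 \times \st Z_2 = \st (Z_1 \times Z_2)$, and part 2 for intersections, $\mathcal{S}$ is closed under finite unions and intersections, products, permutation of coordinates and projections, and it contains every closed semialgebraic set. The claim is then that every closed $\mathbb{R}_{ind}$-definable set belongs to $\mathcal{S}$, which I would prove by induction on the ambient dimension $n$. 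For $n = 1$ this is immediate, since by o-minimality a closed $\mathbb{R}_{ind}$-definable subset of $\mathbb{R}$ is a finite union of points and closed intervals. For $n > 1$, decompose $\mathbb{R}^n$ into $\mathbb{R}_{ind}$-cells compatible with the given closed set $C$; since $C$ is closed it equals the union of the closures of the cells contained in it, so it is enough to show $\closure{D} \in \mathcal{S}^n$ for each such cell $D$. If $D$ is the graph of an $\mathbb{R}_{ind}$-definable continuous function $f$ over a cell $D' \subseteq \mathbb{R}^{n-1}$, or the band between two such functions, then $\closure{D}$ is determined by $\closure{D'}$ — which lies in $\mathcal{S}^{n-1}$ by the inductive hypothesis — together with the defining functions, so the remaining task is to lift those functions to $R$: find $F \in \Def(R)$ inducing $f$ in the sense defined above, after which $\closure{\Gamma f} = \st (\Gamma F)$ (and similarly for the band), placing $\closure{D}$ in $\mathcal{S}^n$.

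The main obstacle is this last step in Part 1. Theorem \ref{indfunct} provides the passage from an $R$-definable function to the $\mathbb{R}_{ind}$-definable function it induces, and one needs essentially the converse — that every $\mathbb{R}_{ind}$-definable continuous function on a cell is, up to closures, induced by an $R$-definable one. This is the place where o-minimality of $\mathbb{R}_{ind}$ enters in an essential way, and it is also what forces the whole argument to be organized around closures of cells: since complementation does not commute with $\st$, one can never afford to form complements of sets in $\mathcal{S}$ directly, only closures.
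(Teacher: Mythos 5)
The paper does not actually prove Proposition~\ref{CandI}; both items are imported from \cite{thesispaper} (item~1 is Proposition~5.1 there, item~2 is extracted from the proof of Lemma~2.15), so there is no in-paper argument to compare against. Judged on its own terms, your treatment of the two items is uneven.

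Your Part~2 argument is correct and is very likely the intended one. The inclusion $\st(X\cap Y^\epsilon)\subseteq\st X\cap\st Y$ holds for any infinitesimal $\epsilon>0$; the identity $\st X\cap\st Y=\st W$ with $W=\{x\in X: d(x,Y)\in\ma\}$ is right (for $x\in W$, choose an infinitesimal $r>d(x,Y)$ and then $y\in Y$ with $d(x,y)<r$, so $\st y=\st x\in\st Y$); and the $(2^{\aleph_0})^{+}$-saturation step producing a single infinitesimal $\epsilon$ above all the $d(x_a,Y)$ is exactly where the saturation hypothesis on $R$ enters. Your remark that a uniform bound on $d(\cdot,Y)$ over $W$ is unavailable, forcing a pointwise choice of representatives $x_a$, is the right subtlety to flag.

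Part~1 has a genuine gap, which you name but do not close. After reducing (correctly, via o-minimality of $\mathbb{R}_{ind}$ and cell decomposition) to showing $\closure{D}\in\mathcal{S}^n$ for each cell $D\subseteq C$, you need that every $\mathbb{R}_{ind}$-definable continuous function on a cell is, up to closure of graphs, induced by an $R$-definable function. That ``converse to Theorem~\ref{indfunct}'' is the actual content of the cited Proposition~5.1, and nothing in your sketch --- closure of $\mathcal{S}$ under unions, products, coordinate permutations and projections, intersections via item~2, plus o-minimality of $\mathbb{R}_{ind}$ --- produces it; stating it as ``the remaining task'' does not discharge it. There is also a smaller slip: you assert $\closure{\Gamma f}=\st(\Gamma F)$ once $F$ induces $f$, but the definition of ``induces'' (with the roles of $f$ and $F$ swapped relative to the paper's convention) only yields $\Gamma f=\st(\Gamma F)\cap(\domain{f}\times\mathbb{R})$, hence $\closure{\Gamma f}\subseteq\st(\Gamma F)$ (since $\st$-images of bounded definable sets are closed); equality requires $\Gamma f$ to be dense in $\st(\Gamma F)$, which must be arranged when choosing the decomposition. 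So: item~2 can be taken as proved; item~1 is a correct reduction to the lifting lemma, but that lemma is precisely what the paper is citing and would need an independent argument.
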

 Below, $\nu$-measure is defined just as in the case when the underlying set of the structure is $\mathbb{R}$, except using $\mu$ rather than $\lambda$. 
\begin{theorem}\label{main}
Let $\mathcal{G}=(A\dot\cup B, G)$ be a definable bipartite, $\mu$-preserving graph, $d$ an upper bound on the degrees of its vertices, and $M\subseteq G$ a definable matching.
Let further $K\in \mathbb{N}$, $ \delta \in \mathbb{R}^{>0}$, and $\epsilon \in \mathbb{R}^{>0}$ subject to $\epsilon <\frac{\delta}{d^{2K+2}}$.  Then there is a definable matching $X\subseteq G$ such that $X$ covers all vertices covered by $M$ outside of a definable set of $\mu$-measure $< \epsilon$ and $X$ has no augmenting paths of length $\leq 2K+1$ outside of a set of $\nu$-measure $<\delta$.
\end{theorem}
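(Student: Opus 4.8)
The plan is to transport the problem to the induced structure $\mathbb{R}_{ind}$, solve it there by Proposition \ref{shortaps}, and carry the solution back to $R$ along the standard part map, using the results of \cite{st} and \cite{thesispaper} for the two-way dictionary. Concretely: since $\mathcal{G}$ is $\mu$-preserving we may fix a decomposition whose open cells $C$ are such that $G\cap(C\times R^n)$ and $M\cap(C\times R^n)$ are finite unions of graphs of $n$-isomorphisms $f$, necessarily with $\Gamma f\subseteq\mathcal{O}^{2n}$. Applying Theorem \ref{indfunct} to the coordinate functions of each such $f$ and each $f^{-1}$ yields a decomposition $\mathcal{C}$ of $\mathbb{R}^n$ partitioning $\st A$ and $\st B$ together with, over the open cells of $\mathcal{C}$, induced $C^{1}$ maps $\bar f$; since the Jacobian matrix of $\bar f$ is the standard part of $Jf$, one has $|J\bar f|\equiv 1$. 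Collecting these produces a graph $\bar{\mathcal{G}}=(\bar A\dot\cup\bar B,\bar G)$ definable in the o-minimal structure $\mathbb{R}_{ind}$, with $\bar A=\st A$ and $\bar B=\st B$ (taken disjoint if these overlap), which is $\lambda$-preserving and has degrees $\le d$; applying the same to $M$ gives an induced relation $\bar M$, which becomes a matching once we delete the $\lambda$-null set of points lying over the overlaps of cells of $\mathcal{C}$, and whose covered vertices include the standard parts of all $M$-covered vertices up to a $\lambda$-null set. I would also choose the decomposition underlying the path space $\mathcal{P}$ and the measure $\nu$ attached to $\mathcal{G},M$ compatibly with $\mathcal{C}$ via $\st$ (pulling $\mathcal{C}$ back to $R$ by Proposition \ref{CandI}(1)), so that $\st$ identifies the full-dimensional part of $\mathcal{P}$ with that of the path space of $\bar{\mathcal{G}}$ and carries $\nu$ to its archimedean analogue $\bar\nu$.

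Now apply Proposition \ref{shortaps} to $\bar{\mathcal{G}},\bar M$ inside $\mathbb{R}_{ind}$, with the given $K$ and with $\bar\delta:=\delta-d^{2K+2}\epsilon>0$ (permissible by the hypothesis $\epsilon<\delta/d^{2K+2}$, and it can be shrunk slightly if a strict inequality is needed below). This yields a matching $\bar M'\subseteq\bar G$, definable in $\mathbb{R}_{ind}$, covering every vertex covered by $\bar M$ and whose augmenting paths of length $\le 2K+1$ form a set of $\bar\nu$-measure $<\bar\delta$. Up to a $\lambda$-null set, $\bar M'$ is a finite union of restrictions $\bar f|_{\bar D}$ of induced isomorphisms to $\mathbb{R}_{ind}$-definable cells $\bar D$; for each such piece let $f$ be the $n$-isomorphism of $\mathcal{G}$ inducing $\bar f$, pick by Proposition \ref{CandI}(1) an $R$-definable $Z_{\bar D}$ with $\st Z_{\bar D}=\closure{\bar D}$, and then by Proposition \ref{CandI}(2) shrink the $Z_{\bar D}$ so that their standard parts become pairwise disjoint. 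Put $X:=\bigcup_{\bar D}f|_{Z_{\bar D}\cap\domain f}$. Then $X\subseteq G$, and $X$ is a matching because each piece is injective and the pieces sit over pairwise disjoint standard parts.

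It remains to estimate the two error terms. First, a vertex covered by $M$ but not by $X$ has standard part outside the pieces retained after shrinking, hence in a small neighborhood of $\bigcup_{\bar D}\partial\bar D$ together with a $\lambda$-null set; shrinking carefully (Proposition \ref{CandI}(2)) and refining $\mathcal{C}$ keeps this set $T$ definable with $\mu(T)<\epsilon$. Second, a length-$\le 2K+1$ augmenting path $p$ for $X$ whose set of starting vertices has positive $\mu$-measure has, by Lemma \ref{Qbox}, full-dimensional standard part, and then $\st p$ is a length-$\le 2K+1$ augmenting path for $\bar M'$ \emph{unless} an endpoint of $p$ lies in $T$; since each vertex lies on at most $d^{2K+2}$ members of $\mathcal{P}$ and the edge maps are measure-preserving, the augmenting paths of the first kind have $\nu$-measure $<\bar\delta$ (via the $\st$-identification) and those of the second kind have $\nu$-measure $<d^{2K+2}\epsilon$, for a total $<\delta$. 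The main obstacle is exactly this last step: systematically ruling out the ways a short augmenting path in $R$ can fail to descend to one for $\bar M'$ --- an edge collapsing under $\st$, a vertex straddling a cell boundary, an endpoint landing in the lifting error set $T$ --- while at the same time checking that the set-theoretic lift $X$ is genuinely a matching contained in $G$; here Lemma \ref{Qbox} and Proposition \ref{CandI} do the real work.
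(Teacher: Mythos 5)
Your overall strategy coincides with the paper's: push the matching problem down to $\mathbb{R}_{ind}$ along $\st$, solve it there by Proposition \ref{shortaps}, and lift the solution back. However, there are two genuine gaps.

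First, the claim that the induced relation $\bar M$ ``becomes a matching once we delete the $\lambda$-null set of points lying over the overlaps of cells of $\mathcal{C}$'' is not correct as stated, and it papers over what is in fact the hardest step of the proof. The paper's $M' = \st M_1 \cap G'$ is already defined over disjoint open cells, so there are no overlaps to delete; the issue is that even so, $\st$ of an injective $n$-isomorphism need not be injective, and two $n$-isomorphisms over disjoint cells in $R^n$ can \emph{a priori} have standard parts with overlapping images. Ruling this out is the content of the paper's Claim, and the crucial case ($X \in C_1$, $X' \in C_2$ with $C_1 \neq C_2$) requires a real argument: one produces nested open balls and then invokes Lemma~\ref{Qbox} to contradict disjointness of $f(B_{\delta/2}(x))$ and $g(B_{\delta/2}(x'))$. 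Your proposal simply asserts that $\bar M$ is a matching without this verification, so the argument does not go through.

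Second, the lifting step is imprecise in a way that matters. You take $Z_{\bar D}$ with $\st Z_{\bar D} = \closure{\bar D}$ and then propose to use Proposition~\ref{CandI}(2) to ``shrink the $Z_{\bar D}$ so that their standard parts become pairwise disjoint.'' But Proposition~\ref{CandI}(2) characterizes the intersection of standard parts as a standard part of an $\epsilon$-thickened intersection; it does not provide a shrinking mechanism, and in any case the closures $\closure{\bar D}$ of adjacent open cells genuinely share boundary points in $\mathbb{R}^n$, so no shrinking on the $R$-side alone fixes this. The paper handles this by first replacing each cell $C$ by its interior $\alpha$-shrinking $C_\alpha$ \emph{on the real side}, so that the $\closure{C_\alpha}$ are at positive real distance from one another, and only then applying Proposition~\ref{CandI}(1) to lift. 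This is what makes $d(D_{C_1}, D_{C_2}) > \ma$ and hence the lifted pieces disjoint, which is exactly what is needed to check that $X$ is a matching and to control the measure of the error set $T$. Your proof gestures at ``shrinking carefully and refining $\mathcal{C}$'' but does not supply this mechanism, so both the matching property of $X$ and the $\mu(T) < \epsilon$ bound remain unjustified.
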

\begin{proof}
We may assume that $\dim(\st A)=n=\dim(\st B)$.
Let $\mathcal{D}$ be a decomposition of $R^{2n}$ into cells which partitions $G$ and $M$. Set \[\mathcal{D}_0 = \{\pi^{2n}_{n}D\colon \; D\in \mathcal{D} \, \& \, D\subseteq G \, \& \, \dim{\pi^{2n}_{n}D}=n \}.\] 
We may assume that if $D\in \mathcal{D}$, $D\subseteq G$ is such that $\pi^{2n}_{n}D \in \mathcal{D}_0$, then $D$ is the union of finitely many $\Gamma f$, where each $f$ is an $n$-isomorphism, and we denote the collection of these $n$-isomorphisms on $\pi^{2n}_{n}D$ by $\mathcal{F}_{\pi^{2n}_{n}D}$. 



Set \[G_1 = G \cap \bigcup_{D\in \mathcal{D}_0 } (D\times R^{n}) \mbox{ and } M_1 = M \cap \bigcup_{D\in \mathcal{D}_0 } (D\times R^{n}),\] and let $\mathcal{C}$ be a decomposition of $\mathbb{R}^{2n}$ into cells partitioning $\st G_1$ and $\st M_1$ and such that $\{\pi^{2n}_{n}C\colon C\in \mathcal{C} \}$ partitions each $\st D$ where $D\in \mathcal{D}_0$.  Let 
\[\mathcal{C}_0 = \{ \pi_{n}^{2n} C \colon C\in \mathcal{C} \,\&\, C\subseteq \st G_1 \,\&\, \dim{C}=n \} .\]
Suppose $D\in \mathcal{D}_0$ and $f \in \mathcal{F}_D$, and let $C \in \mathcal{C}_0$ be such that $C\subseteq \st D$.
Then by Theorem \ref{indfunct}, we may assume that $f$ induces an $n$-isomorphism $g \colon C \to g(C)$.
For $C \in \mathcal{C}_0$, 
let $\mathcal{F}_{C}$ be the set of all $g\colon C \to \mathbb{R}^n$ that are induced by some $f\in \mathcal{F}_{D}$, where $D \in \mathcal{D}_0$ and $C \subseteq \st D$.

We set \[G' = \st G_1 \cap \bigcup_{\begin{smallmatrix} C\in \mathcal{C}_0  \end{smallmatrix}} (C\times \mathbb{R}^n).\]  Then $G'$ is the edge relation of the $\mathbb{R}_{ind}$-definable, $\lambda$-preserving, bipartite graph $\mathcal{G}'$ with bipartition $A'$, $B'$, where $A' = \pi^{2n}_{n} G'$ and $B'$ the projection of $G'$ onto the last $n$ coordinates. 
\begin{claim}
The relation $M' = \st M_1 \cap G'$ is a definable matching in $G'$. 
\end{claim}

\begin{proof of claim}
To see that $M'$ is the graph of a function, assume towards a contradiction that $(X,Y), (X,Y') \in M'$ and $Y\not=Y'$.  Then $X \in C$ for some $C\in \mathcal{C}_0$ and $C\subseteq \st{D}$ for some $D\in \mathcal{D}$.
Since $C$ is open, $\st^{-1}(X)\subseteq D$, and there are $x_1 , x_2 \in D$ such that $\st{x_1} = \st{x_2}=X$, and $\st{f(x_1)}=Y$ and $\st{f(x_2 )}=Y'$ for the unique $f\in \mathcal{F}_{D}$ with $\Gamma f \subseteq M_1$, contradicting that $f$ induces a function $C \to \mathbb{R}^n$.

 Suppose now that $(X,Y) , (X',Y) \in M'$ with $X\not=X'$.  
 If there is $C\in \mathcal{C}_0$ such that $X,X' \in C$, then $\st^{-1}X , \st^{-1}X' \subseteq D$ for some $D\in \mathcal{D}$, and for $f \in \mathcal{F}_D$ with $\Gamma f \subseteq M_1$, we have $\st f(x) = \st f(x')$ for some $x\in \st^{-1}X$, $x'\in \st^{-1}X'$, contradicting that $f$ induces an isomorphism $C\to \mathbb{R}^n$.

 So assume $X\in C_1$, $X'\in C_2$, where $C_1 , C_2 \in \mathcal{C}_0$, $C_1 \not= C_2$.  Let $F\colon C_1 \to \mathbb{R}^n$ and $G\colon C_2 \to \mathbb{R}^n$ be induced by $f\in \mathcal{F}_{D_1}$ and by $g\in \mathcal{F}_{D_2}$ respectively, where $\Gamma f, \Gamma g \subseteq M_1$ and $F(X)=Y=G(X')$.  Then there is $\delta >0$ such that $B_{\delta }(X)\subseteq C_1$ and $B_{\delta }(X')\subseteq C_2$ and, since $F$, $G$ are homeomorphisms, $F(B_{\delta }(X))$, $G(B_{\delta }(X'))$ are open subsets of $\mathbb{R}^n$.

 Since $\st^{-1}(B_{\delta} (X))\subseteq D_1$ and $\st^{-1}(B_{\delta}(X'))\subseteq D_2$, we have $B_{\frac{\delta}{2} }(x)\subseteq D_1$ and $B_{\frac{\delta}{2}}(x')\subseteq D_2$, where $x,x'$ are such that $\st{x}=X$ and $\st{x'}=X'$.  So $F(B_{\frac{\delta}{2}} (X))\subseteq \st{f(B_{\frac{\delta}{2}}(x))}$ and $G(B_{\frac{\delta}{2}} (X'))\subseteq \st{g(B_{\frac{\delta}{2}}(x'))}$. Since $Y \in F(B_{\frac{\delta}{2}}(X))\cap G(B_{\frac{\delta}{2} }(X'))$, there is $\epsilon >0$ such that \[ B_{\epsilon }(Y)\subseteq F(B_{\frac{\delta}{2}}(X))\cap G(B_{\frac{\delta }{2}}(X')), \] hence  $B_{\epsilon }(Y) \subseteq \st f(B_{\frac{\delta}{2} }(x)) \cap \st{g(B_{\frac{\delta}{2}}(x'))}$ - a contradiction with 
 \[f(B_{\frac{\delta}{2} }(x)) \cap \, g(B_{\frac{\delta}{2} }(x'))=\emptyset \] and Lemma \ref{Qbox}.

 \end{proof of claim}

\medskip\noindent
By Proposition \ref{shortaps}, we can find an $\mathbb{R}_{ind}$-definable matching $M''\subseteq G'$ such that all augmenting paths outside of a definable $P\subseteq \mathcal{P}$ of $\nu$-measure $<\frac{\delta}{2}$ are of length $> 2K+1$.

Let $\mathcal{C}'$ be a decomposition of $\mathbb{R}^{2n}$ into cells which is a refinement of $\mathcal{C}$ and partitions $M''$, and let $\mathcal{C}'_{0}$ consist of the cells $\pi^{2n}_{n}C$ of dimension $n$ such that $C\in \mathcal{C}'$ and $C\subseteq M''$.  Find $\alpha \in \mathbb{R}^{>0}$ such that \[\Sigma_{C\in \mathcal{C}'_{0}} \lambda (C\setminus C_{\alpha}) <\frac{\delta}{4\cdot d^{2K+1}}<\frac{\epsilon}{2},\] where \[ C_{\alpha} = \{x\in C\colon d(\partial C,x)\geq \alpha \}.\]
By \ref{CandI}, we can find for each $C\in \mathcal{C}'_0$ and $D \in \mathcal{D}_0$ with $C\subseteq \st D$ a definable $D_C \subseteq D$ such that $\st D_C = \closure{C_{\alpha}}$.  Note that \[M''':=M'' \cap \bigcup_{C\in \mathcal{C}'_{0}} (C_{\alpha} \times \mathbb{R}^n )\] covers the same vertices as $M''$ outside of a set of measure $<\epsilon$, hence $M'$ does, too.  Moreover, $M'''$ has no augmenting paths of length $\leq 2K+1$ outside of a subset of $\mathcal{P}$ of $\nu$-measure $<d^{2K+1} \cdot \frac{\delta}{2\cdot d^{2K+1}}+\frac{\delta}{2}=\delta$.

We now define the desired matching $X\subseteq G$ as a subset of \[\bigcup_{C\in \mathcal{C}'_{0}} (D_C \times R^n ) \cap G.\]  For each $C\in \mathcal{C}'_{0}$ and $D$ and $D_C$ as above,  let $f_{D_C}$ be the restriction to $D_C$ of the first function in $\mathcal{F}_{D}$ which induces the function with graph $M'' \cap (C\times \mathbb{R}^n)$.  Then \[X=\bigcup \{\Gamma f_{D_C}\colon C\in \mathcal{C}'_{0} \}.\]
It is left to check that $X$ satisfies the desired properties.
\begin{trivlist}
\item[$X\subseteq G$ is a matching:] The only way $X$ can fail to be a matching is, if there are $x_1 \in D_{C_1}$ and $x_2 \in D_{C_{2}}$ where $C_1 \not= C_2$ and $(x_1 ,y) , (x_2 ,y) \in X$.  But then $(\st x_1 ,\st y) , (\st x_2 ,\st y) \in M''$, so $\st x_1 = \st x_2$, a contradiction with $d(D_{C_1} , D_{C_2})>\ma$.

\item[$X$ has $<\delta$ augmenting paths of length $\leq 2K+1$:]
Let $P$ be a set of augmenting paths for $X$ of length $\leq 2K+1$ and generating sequence $s$. Then we may identify $P$ with the set of starting vertices of the paths in $P$.  Assume that $\mu (P_0 )=\rho >0$.  If suffices to show that then $P$ induces a set of augmenting paths for $M'''$ of length $\leq 2K+1$ and set of starting vertices of $\lambda$-measure $\geq \rho$. Note that we may assume that $P$ is a set of paths in $G_1$. But then it follows straightforwardly from the definitions of $G_1$, $G'$, $X$, $M'''$ and from Lemma \ref{Qbox} that $\mu$-a.e. on $P$, if
$p=(p_0 ,\dots ,p_l ) \in P$, then each $\st p_i$ is a vertex of $G'$; $\st p_0$, $\st p_l$ are not covered by $M'''$; $(\st p_i , \st p_{i+1}) \in G'$; and $(p_i , p_{i+1}) \in X$ iff $(\st p_i , \st p_{i+1}) \in M'''$.

\item[$X$ covers the same vertices covered by $M$ outside of a set of $\mu$-measure $<\epsilon$ :] Let $V\subseteq A\dot\cup B$ be definable and covered by $M$ but not by $X$, and $\mu (V)\geq \epsilon$. Then $V$ is $\mu$-a.e. covered by $M_1$, and hence $\st V$ is covered by $M'$ outside of a set of $\lambda$-measure 0.  Since $\lambda (\st V) >\epsilon $, this yields a contradiction with $\st X = \closure{M'''}$ and $M'''$ covering the same vertices as $M'$ outside of a set of measure $<\epsilon$.

\end{trivlist}

\end{proof}
\end{section}

\begin{section}{Matchings in 2-regular bipartite graphs}
Let $R$ be a sufficiently saturated expansion of a real closed field.
Unless indicated otherwise, we assume that $\mathcal{G}=(A\dot\cup B, G)$ is a definable bipartite graph which is $\mu$-preserving and 2-regular. 
We shall show the existence of a definable matching in $\mathcal{G}$ covering all vertices outside of a set of arbitrarily small positive $\mu$-measure.

\medskip\noindent
We first consider the case in which we are given a definable matching without any short augmenting paths:
Let $K$ be an even integer and let $M\subseteq G$ be a definable matching without augmenting paths of length $\leq 2K+1$.  
Let $Y_0 \subseteq A\dot\cup B$ consist of the vertices not covered by $M$. 
For $k=0,1, \dots ,\frac{K-2}{2}$, we set $Y_{2k+1} := N_{G}(Y_{2k})$ and $Y_{2k+2} := N_M (Y_{2k+1} )$.  
\begin{lemma}\label{counting}
$\mu Y_{K} = K \cdot \mu Y_{0}$.
\end{lemma}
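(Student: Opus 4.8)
The plan is to use $2$-regularity to promote the hypothesis "$M$ has no augmenting path of length $\le 2K+1$" into exact measure identities relating the layers $Y_i$, and then to finish by an elementary induction.

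First I would fix a cell partition of $A\dot\cup B$ refining one compatible with $G$ and $M$, so that, by $\mu$-preservation and $2$-regularity, on each open cell $G$ is a union of graphs of two $n$-isomorphisms and $M$ is the graph of one of these two or empty (the empty cells being, up to a null set, $Y_0$). This yields two definable maps on the vertices covered by $M$, each an $n$-isomorphism on every cell and hence $\mu$-preserving: $m(v)=$ the $M$-neighbour of $v$ (an involution on covered vertices), and $g(v)=$ the unique non-$M$-neighbour of $v$; meanwhile every vertex of $Y_0$ has exactly two $G$-neighbours, both along non-$M$-edges, say $g_1(v)$ and $g_2(v)$. Since $M$ has no augmenting path of length $\le 2K+1$, for every odd $j\le K-1$ the layer $Y_j$ consists of covered vertices (an uncovered vertex in $Y_{2i+1}$ would be the far end of an augmenting path of length $\le 2i+1\le K-1$); hence $Y_{2k}=m(Y_{2k-1})$ and $m(Y_{2k})=Y_{2k-1}$ for $1\le k\le K/2$, and, since each $v\in Y_{2k}$ ($k\ge1$) has the two $G$-neighbours $m(v)\in Y_{2k-1}$ and $g(v)$,
\[
Y_{2k+1}=Y_{2k-1}\cup g(Y_{2k}),\qquad Y_{2k+2}=Y_{2k}\cup m(g(Y_{2k}))\qquad (1\le k\le \tfrac{K}{2}-1),
\]
along with the base identity $Y_2=m(g_1(Y_0))\cup m(g_2(Y_0))$; in particular $Y_2\subseteq Y_4\subseteq\cdots\subseteq Y_K$.

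The two quantitative inputs I would then establish are $\mu Y_2=2\mu Y_0$ and, for $1\le k\le \tfrac K2-1$, $\mu\big(m(g(Y_{2k}))\setminus Y_{2k}\big)=2\mu Y_0$. The first follows once one checks that $m\circ g_1$ and $m\circ g_2$ are injective and $\mu$-preserving on $Y_0$ with disjoint images: both can fail only if some covered vertex has two non-$M$-neighbours lying in $Y_0$, which would make that vertex uncovered, a contradiction. For the second, the heart of the matter is the set identity
\[
m(g(Y_{2k}))\cap Y_{2k}=Y_{2k}\setminus Y_2 ;
\]
granting it, and that $m\circ g$ is injective and $\mu$-preserving on $Y_{2k}$ (using that $Y_1\cap Y_{2k}$ is null), we get $\mu\big(m(g(Y_{2k}))\setminus Y_{2k}\big)=\mu Y_{2k}-\mu(Y_{2k}\setminus Y_2)=\mu Y_2$, so the recursion gives $\mu Y_{2k+2}=\mu Y_{2k}+2\mu Y_0$. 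An induction starting from $\mu Y_2=2\mu Y_0$ now yields $\mu Y_{2k}=2k\,\mu Y_0$ for $1\le k\le K/2$; setting $2k=K$ completes the proof.

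The main obstacle is proving the structural claims behind these identities --- that $g$ is injective and $\mu$-preserving on $Y_{2k}$, that $Y_1\cap Y_{2k}$ and $Y_2\cap m(g(Y_{2k}))$ are null, and that $Y_{2k}\setminus Y_2\subseteq m(g(Y_{2k}))$. Each reduces to the same principle: a counterexample produces, by splicing together the short alternating walks from $Y_0$ that witness membership in the relevant layers, an alternating walk joining two uncovered vertices, beginning and ending with non-$M$-edges, of length at most $2K+1$; deleting repeated vertices (which in a bipartite graph never lengthens an alternating walk) turns it into a genuine augmenting path of length $\le 2K+1$, against the hypothesis. I expect the heaviest bookkeeping to lie in first establishing, by induction on $j$, that every $v\in Y_{2j}$ carries an alternating walk from an uncovered vertex, of length $\le 2j$, ending in an $M$-edge (and symmetrically for odd layers, ending in a non-$M$-edge), so that the pieces one splices fit together with the correct parity and length --- this being precisely where $2$-regularity and the constraint $2k\le K$ are both needed.
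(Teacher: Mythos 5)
Your plan is correct and proves the lemma, but it organizes the counting differently from the paper. The paper argues locally: for each uncovered vertex $v$ it builds the tree $T_v$ of alternating walks, shows that the set of labels at depth $2l$ has exactly $2l$ elements, and shows that the two branches of one tree and the trees of distinct roots carry disjoint labels; invariance of $\mu$ under the (piecewise) $n$-isomorphisms defining the edges then converts the count into $\mu Y_K=K\cdot\mu Y_0$. You argue globally: the matching involution $m$ and the second-neighbour map $g$ give the recursion $Y_{2k+2}=Y_{2k}\cup m(g(Y_{2k}))$, and your key identity $m(g(Y_{2k}))\cap Y_{2k}=Y_{2k}\setminus Y_2$, together with injectivity and $\mu$-preservation of $m\circ g$ on $Y_{2k}$, yields $\mu Y_{2k+2}=\mu Y_{2k}+2\mu Y_0$; this absorbs the paper's disjointness claims (Claims 1 and 4) and its per-tree count (Claim 2) into a single layer-by-layer measure recursion, making the role of invariance explicit, at the price of the structural lemmas you list. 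Those lemmas are true and your quantitative deductions from them are right; the one inaccuracy is the meta-claim that each of them reduces to the walk-splicing principle. The disjointness of $m(g(Y_{2k}))$ from $Y_2$ does not (the walk one would splice contains two consecutive non-$M$ edges); it follows instead directly from 2-regularity: if $m(g(u))=m(z)$ with $u\in Y_{2k}$ and $z\in Y_1$, then $g(u)=z$, and $z$ then has both the covered vertex $u$ and a vertex of $Y_0$ among its non-$M$ neighbours, forcing $u\in Y_0$, a contradiction. Since you flag 2-regularity as needed at exactly these points, this is a presentational fix rather than a gap; the remaining facts (odd layers up to $Y_{K-1}$ covered, $Y_1\cap Y_{2k}=\emptyset$, injectivity of $g$ on $Y_{2k}$) do follow by splicing as you describe, bipartiteness preserving alternation and parity when repeated vertices are cut out, and every walk involved has length at most $K-1<2K+1$, so the hypothesis is never overrun.
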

\begin{proof}
We sketch the proof of this lemma; the details can be easily filled in by the reader, using induction and the absence of short augmenting paths.

Let $v\in Y_0$.  We denote by $T_v$ the following tree rooted in $v$.  From now on, we shall assume that $l\in \{ 0,\dots ,\frac{K-2}{2} \}$.  If $x$ is a vertex of $T_v$ at depth $2l$, then $x$ has two children, labeled by the two vertices incident with $x$ in $\mathcal{G}$. If $x$ is at depth $2l+1$, then $x$ has one child, labeled by the vertex incident with $x$ via an edge in $M$ (whose existence follows from $M$ not having any augmenting paths of length $\leq 2K+1$ and $\mathcal{G}$ not having any odd cycles).  For simplicity, we assume that for $l>0$, the left child of $x$ at depth $2l$ is matched and the right child is unmatched. Furthermore, if $x$ is a vertex of $T_v$, then we denote by $T_x$ the maximal subtree of $T_v$ rooted in $x$. 
We denote by $X_l$ the set of labels of vertices in $T_v$ at depth $l$, and by depth we shall always mean depth with respect to $T_v$ (even when talking about a subtree).  We write $d(x)$ for the depth of a vertex $x$.
Note that we have $|X_{2l+1}| = |X_{2l+2}|$.

Let $x_1$ be the left and $y_1$ the right child of $v$ -- see the picture below. 

\begin{forest}
[$v$
[$x_1$, edge=dashed [$x_2$ [$x_1$ [$x_2$
[$x_1$][$x_3$, edge=dashed]
]] [$x_3$, edge=dashed [$x_4$
[$x_3$] [$x_{5}$, edge=dashed]
]]]  ]
[ $y_1$, edge=dashed [$y_2$ [$y_1$ [$y_2$ 
[$y_1$] [$y_3$, edge=dashed]
]][$y_3$, edge=dashed [$y_4$
[$y_3$] [$y_5$, edge=dashed]
]]]]
]
\end{forest}

\bigskip

\smallskip\noindent
{\bf Claim 1.\/}
The set of labels of $T_{x_1}$ and the set of labels of $T_{y_1}$ are disjoint.

\noindent
Proof. Suppose $x_i = y_j$, where $d (x_i )\leq d (y_j )$, and there is no label in $T_{x_1}$ appearing in $T_{y_1}$ and being the label of a vertex at depth $\leq d( x_i )$.  
Because $\mathcal{G}$ has no odd cycles, both $d(x_i )$ and $d(y_j )$ are even or both are odd.  We may assume that both are odd.
Now $x_i , y_j$ are each either a matched or an unmatched child of its respective parent, but any possible combination leads to a contradiction with the minimality of $d(x_i )$.  

\smallskip\noindent
{\bf Claim 2.\/}
Suppose $l>0$. Then $|X_{2l}| = 2l$.

\noindent
Proof. Observe that in $T_{x_1}$ and in $T_{y_1}$ respectively, after identifying vertices with same labels at each depth (so a label can only repeat at different depths), at depth $2l+1$, there are exactly two vertices with indegree 1 (so all other vertices have indegree 2).

\smallskip\noindent
The Lemma now follows from the next claim.

\smallskip\noindent
{\bf Claim 4.\/}
Let $v,v'\in Y_0$, $v\not= v'$.  Then the sets of labels at depth $l$ of the tree $T_v$ and of the tree $T_{v'}$ are disjoint.

\end{proof}

\begin{theorem}\label{appl}
Let $\epsilon \in \mathbb{R}^{>0}$.  Then there is a definable matching $M\subseteq G$ such that $M$ covers all vertices outside of a set of measure $< \epsilon$.  
\end{theorem}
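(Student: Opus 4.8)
The plan is to combine Theorem \ref{main} with the counting estimate Lemma \ref{counting} in order to manufacture an expansion condition, and then to run essentially the same iteration-of-augmenting-paths argument that produced Theorem \ref{mainthm} from Proposition \ref{shortaps} in the archimedean setting. Fix $\epsilon \in \mathbb{R}^{>0}$. Since $\mathcal{G}$ is $2$-regular, $d=2$ in Theorem \ref{main}. Start with an arbitrary definable matching $M_0\subseteq G$ (for instance, for each open cell $C$ in a partition witnessing $\mu$-preservation, take one of the two $n$-isomorphisms whose graph meets $C\times R^n$; after shrinking to the interior one gets a genuine definable matching, possibly missing a set of measure zero). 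I would then pick an even integer $K$ large enough that $\frac{1}{K}<\epsilon$, or rather large enough that the geometric-type bound below closes, and apply Theorem \ref{main} repeatedly: at stage $j$ produce a definable matching $M_j\subseteq G$ covering all vertices covered by $M_{j-1}$ outside a definable set of $\mu$-measure less than $\epsilon_j$, and with no augmenting paths of length $\le 2K+1$ outside a set of $\nu$-measure less than $\delta_j$, where the $\epsilon_j,\delta_j$ are chosen summable and subject to $\epsilon_j<\delta_j/2^{2K+2}$ as required by the hypothesis of Theorem \ref{main}.

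The key point is the following. Suppose $M\subseteq G$ is a definable matching with \emph{no} augmenting paths of length $\le 2K+1$ at all, and let $Y_0$ be the set of uncovered vertices. Lemma \ref{counting} gives $\mu Y_K = K\cdot \mu Y_0$. Since $Y_K\subseteq A\dot\cup B\subseteq[0,1]^n$ has $\mu Y_K\le 2$, this forces $\mu Y_0\le \frac{2}{K}<\epsilon$ once $K>2/\epsilon$. So a matching with no short augmenting paths automatically covers all but an $\epsilon$-set; the role of $2$-regularity is precisely that it supplies this a priori bound on $\mu Y_K$, playing the part that the expansion condition plays in \cite{ln,wang}. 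What Theorem \ref{main} gives is only ``few short augmenting paths'' rather than ``no short augmenting paths,'' so I need to upgrade ``the set of short augmenting paths has small $\nu$-measure'' to ``the set of \emph{uncovered vertices} affected by short augmenting paths has small $\mu$-measure,'' and then absorb that discrepancy. Concretely: if the set of augmenting paths of length $\le 2K+1$ for $M_j$ has $\nu$-measure $<\delta_j$, then the set of starting vertices of such paths has $\mu$-measure $<\delta_j$ (by the definition of $\nu$ as Lebesgue measure, resp.\ $\mu$, on $\dot\bigcup_s \mathcal{S}_s$), so outside a definable set $Z_j$ of $\mu$-measure $<\delta_j$ the matching $M_j$ behaves, on the relevant trees $T_v$, exactly like a matching with no short augmenting paths. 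Running the counting argument of Lemma \ref{counting} relativised to $v\in Y_0\setminus Z_j'$ (where $Z_j'$ is $Z_j$ together with its $\le 2K+1$-step neighbourhood, still of $\mu$-measure $O(\delta_j)$ since degrees are bounded by $2$) yields $\mu(Y_0\setminus Z_j') \le \frac{2}{K}$, hence $\mu Y_0 \le \frac{2}{K}+O(\delta_j)$.

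Finally I would take $M:=M_j$ for a single sufficiently large $j$: choosing $K>4/\epsilon$ and $\delta_j$ small enough that the $O(\delta_j)$ error is $<\epsilon/2$ and the cumulative loss $\sum_{i\le j}\epsilon_i$ from the iterated applications of Theorem \ref{main} is $<\epsilon/2$, we get a definable matching $M\subseteq G$ whose uncovered set has $\mu$-measure $<\epsilon$, which is exactly the statement. In fact a single application of Theorem \ref{main} to $M_0$ with parameters $K,\delta$ tuned as above already suffices, so the iteration is only needed if one is uncomfortable estimating the measure loss in one step; the cleaner writeup uses one application. The main obstacle I expect is the bookkeeping in the relativised version of Lemma \ref{counting}: one must check that deleting the bad vertex set $Z_j'$ does not destroy the disjointness-of-labels claims (Claims 1 and 4 in the proof of Lemma \ref{counting}) used to get $\mu Y_K = K\mu Y_0$, i.e.\ that these remain valid when restricted to the ``good'' roots $v$, and that the exceptional sets introduced at each depth of each tree $T_v$ assemble into a single definable set of controlled measure. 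This is where $2$-regularity, the definability of the construction, and the finiteness of the trees $T_v$ all have to be used together, but no new idea beyond Lemma \ref{counting} and Theorem \ref{main} is required.
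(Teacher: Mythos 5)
Your proposal is essentially the paper's own proof: a single application of Theorem \ref{main} to obtain a matching $M$ with few short augmenting paths, followed by Lemma \ref{counting} applied to the ``good'' uncovered vertices $Y_0 \setminus Z$ (where $Z$ is the set of starting vertices of short augmenting paths) and the trivial bound $\mu Y_K \le \mu(A\dot\cup B)$ coming from $A,B\subseteq[0,1]^n$, giving $\mu(Y_0\setminus Z)\le \mathrm{const}/K$ and hence $\mu Y_0 < \epsilon$. The iteration of Theorem \ref{main} and the enlargement of $Z$ to a $(2K{+}1)$-step neighbourhood $Z'$ that you sketch are superfluous — as you note yourself, one application with $Z$ alone suffices, since for $v\in Y_0\setminus Z$ the tree $T_v$ already contains no short augmenting path from $v$, which is all Claims 1--4 of Lemma \ref{counting} need — and discarding them recovers the paper's argument exactly.
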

\begin{proof} 
Let $\epsilon \in \mathbb{R}^{>0}$ and let $K$ be even, such that $\frac{1}{K} <\frac{\epsilon}{2}$.  By Theorem \ref{main}, we can find a definable matching $M\subseteq G$ with $<\frac{\epsilon}{2}$ augmenting paths of length $\leq 2K+1$. Let $Y_0$ be the set of vertices not covered by $M$, and let $Z\subseteq Y_0$ be the set of starting vertices of augmenting paths of length $\leq 2K+1$.  Then, by the proof of Lemma \ref{counting}, \[K \cdot \mu (Y_0 
\setminus Z) = \mu (Y_0 \setminus Z)_K ,\] where $(Y_0 \setminus Z)_K$ is defined just as $Y_K$, except that one starts with $Y_0 \setminus Z$ instead of $Y_0$ in the inductive definition.
Since $\mu (Y_0 \setminus Z)_{K}$ is bounded above by 1, $\mu (Y_0 \setminus Z) \leq \frac{1}{K}$. So \[\mu Y_0 < \frac{1}{K} + \frac{\epsilon}{2} < \epsilon.\]

\end{proof}
For the application in the next section, we need a version of Theorem \ref{appl} for graphs in which the condition of 2-regularity is relaxed to being 2-regular outside of a set of positive measure (in the application, the positive measure can be assumed to be arbitrarily small).

\begin{corollary}\label{almost2regular}
Let $\mathcal{G} = (A\dot\cup B,G)$ be a definable bipartite graph which is $\mu$-preserving, and such that the degrees its vertices are 2 outside of a set of $\mu$-measure $<\delta \in \mathbb{R}^{>0}$, on which the degrees are equal to 1.  Then for every $\epsilon \in \mathbb{R}^{>\delta }$, $\mathcal{G}$ admits a definable matching covering all vertices outside of a set of measure $<\epsilon$. 
\end{corollary}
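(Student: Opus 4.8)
The plan is to reduce Corollary \ref{almost2regular} to Theorem \ref{appl} by repairing the defect set on which $\mathcal{G}$ fails to be $2$-regular. Let $D\subseteq A\dot\cup B$ be the definable set of measure $<\delta$ on which the vertices have degree $1$, and let $\epsilon>\delta$ be given. The vertices of degree $1$ come in (a definable family of) pairs: each such vertex $v$ lies on a unique edge, whose other endpoint is also a degree-$1$ vertex (since $\mathcal{G}$ is bipartite and $\mu$-preserving, the edge relation restricted to an open cell is the graph of an $n$-isomorphism, so the degree-$1$ locus on the $A$-side maps bijectively, measure-preservingly, onto the degree-$1$ locus on the $B$-side via these edges). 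First I would record that $D$ is covered by a matching $M_D\subseteq G$ simply by taking, on each open cell of a suitable partition of the degree-$1$ locus, the graph of the relevant $n$-isomorphism; this $M_D$ is a definable matching covering all of $D$ outside of a set of measure $0$.

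Next I would modify $\mathcal{G}$ to make it genuinely $2$-regular. The natural move is to delete the degree-$1$ locus together with everything it is attached to: set $A' = A\setminus (D\cup N_G(D))$, $B' = B\setminus (D\cup N_G(D))$, and $G' = G\cap (A'\times B')\cup(B'\times A')$. Because every vertex originally had degree $\le 2$, removing $D\cup N_G(D)$ drops the degree of a surviving vertex by at most the number of its neighbors in that set; one checks (using again that $G$ restricted to a cell is the graph of an $n$-isomorphism, hence that $N_G(D)$ and $D\cup N_G(D)$ have $\mu$-measure $O(\delta)$, say $<3\delta$) that $\mathcal{G}'=(A'\dot\cup B',G')$ is a definable bipartite $\mu$-preserving graph which is $2$-regular — or, if the bookkeeping is not clean, one iterates the removal finitely many times, or instead adds dummy edges to restore degree $2$; either way the discarded set has measure bounded by a fixed multiple of $\delta$, which we can assume is $<\epsilon-\delta$ by shrinking $\delta$ at the outset (the statement quantifies over $\epsilon>\delta$, so we are free to take $\delta$ as small as convenient relative to $\epsilon$). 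Apply Theorem \ref{appl} to $\mathcal{G}'$ with parameter $\epsilon' := \epsilon - \mu(D\cup N_G(D)) - \delta > 0$ to get a definable matching $M'\subseteq G'\subseteq G$ covering all vertices of $\mathcal{G}'$ outside a set of measure $<\epsilon'$.

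Finally I would glue: set $M := M' \cup (M_D\cap (D\times R^n\cup R^n\times D))$, i.e.\ use $M'$ on $A'\dot\cup B'$ and $M_D$ on the degree-$1$ locus. These two matchings have disjoint vertex sets by construction (the degree-$1$ locus $D$ was removed from $\mathcal{G}'$), so $M$ is a definable matching in $G$. The uncovered vertices of $\mathcal{G}$ are contained in (the $\mathcal{G}'$-uncovered set, of measure $<\epsilon'$) $\cup$ ($N_G(D)$, which $M'$ may fail to cover) $\cup$ (the measure-$0$ part of $D$ missed by $M_D$), whose total measure is $<\epsilon' + \mu(N_G(D)) + 0 \le \epsilon$. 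The main obstacle is the middle step — verifying that after excising the defect locus the resulting graph is honestly $2$-regular and $\mu$-preserving, with a definable cell partition witnessing $\mu$-preservation, and controlling the measure of everything thrown away; this is where the structure theory of $\mu$-preserving graphs (the edge relation on each open cell being the graph of an $n$-isomorphism) does the real work, since it is what guarantees that $D$, $N_G(D)$, and the various restrictions remain definable and measure-comparable. Everything else is bookkeeping with finite additivity of $\mu$.
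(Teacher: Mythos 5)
Your proposal has several genuine gaps, and the central one kills the argument at the very first step.

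\textbf{The pairing claim is false.} You assert that every degree-$1$ vertex is incident to another degree-$1$ vertex, citing the fact that the edge relation on an open cell is the graph of an $n$-isomorphism. But an $n$-isomorphism maps a cell of $A$ onto a set in $B$; it says nothing about the \emph{degree} of the image vertex, which is determined by how many \emph{other} isomorphism-graphs pass over it. A degree-$1$ vertex in $A$ can perfectly well be the unique neighbor of a degree-$2$ vertex in $B$. Indeed, this is precisely the situation Corollary \ref{almost2regular} is built to handle: in the application (Theorem \ref{2cancellation}) the paper first strips out the pairs of mutually incident degree-$1$ vertices by hand, and what remains — still fed into this corollary — consists of degree-$1$ vertices whose unique neighbor has degree $2$. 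So your matching $M_D$ does not exist in general.

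\textbf{The excision doesn't stabilize.} Deleting $D\cup N_G(D)$ does not produce a $2$-regular graph: every surviving vertex that had a neighbor in $N_G(D)$ now has its degree reduced, so you create a fresh defect set $D_1$, and iterating creates $D_2, D_3, \dots$. In a graph that is $2$-regular off $D$, the components are paths and cycles, with the degree-$1$ vertices sitting at the ends of paths; your excision has to eat an entire path to terminate, and a path can have arbitrary $\mu$-measure. Nothing bounds the total mass removed by a multiple of $\delta$. Adding dummy edges runs into the opposite problem: the resulting graph need not be $\mu$-preserving, and the matching you get may use dummy edges and hence not restrict to a matching in $G$.

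\textbf{The parameter bookkeeping is not available.} You propose to ``shrink $\delta$ at the outset,'' but $\delta$ is not a free parameter — it is an upper bound on the measure of the degree-$1$ locus, which is a fixed feature of $\mathcal{G}$. The statement quantifies over $\epsilon > \delta$, so $\epsilon$ may be as close to $\delta$ as one likes, and the final uncovered set must have measure $< \epsilon$. Even a single excision of $D \cup N_G(D)$, of measure roughly $2\delta$, already overshoots when $\epsilon < 2\delta$. Any argument that throws away $O(\delta)$ with a constant $>1$ cannot work here; the paper's bound $\mu Y_0 < \tfrac{1}{K} + \delta + \tfrac{\epsilon-\delta}{2}$ is tuned to land strictly below $\epsilon$ precisely because the contribution of the degree-$1$ defect is exactly $\delta$, not a multiple of it.

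The paper's actual proof does not excise anything. It takes a matching with few short augmenting paths (Theorem \ref{main}) and reruns the expansion/counting argument from Theorem \ref{appl}: for each uncovered root $v$ one grows the tree $T_v$, shows that distinct roots give trees with disjoint sets of degree-$1$ vertices (a short parity argument), so at most a $\mu$-measure $\delta$ worth of roots have ``defective'' trees; discarding those and the roots of short augmenting paths, the remaining trees expand at rate $K$ inside a space of measure $\le 1$, forcing $\mu Y_0 < \tfrac{1}{K} + \delta + \tfrac{\epsilon-\delta}{2} < \epsilon$. That is the idea you need.
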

\begin{proof}
Let $\epsilon \in \mathbb{R}^{>\delta}$, and let $K \in \mathbb{N}^{>0}$ be such that $\frac{1}{K} < \frac{\epsilon - \delta}{2}$.   Let $M$ be a definable matching in $\mathcal{G}$ with fewer than $\frac{\epsilon - \delta }{2}$ augmenting paths of length $\leq 2K+1$.
The proof of Theorem \ref{appl} can be adapted as follows.  Let $Y_0$ and $T_v$ for $v\in Y_0$ be as in Lemma \ref{counting}.
\begin{claim}
For distinct $v,w\in Y_0$, $T_v$ and $T_w$ have disjoint sets of vertices of degree 1.
\end{claim}
\begin{proof}
In the construction of $T_v$ and $T_w$, vertices appear for the first time along a possible initial segment of an augmenting path starting in $v$.  Suppose that $x_i = y_j$, $\deg{x_i }=1$, where $x_i$ is a vertex of $T_v$, appearing for the first time at depth $i$ and $y_j$ is a vertex of $T_w$, appearing for the first time at depth $j$. We may assume $i\leq j$.  
If $i=0$, then $T_w$ contains an augmenting path of length $\leq K$.
So suppose $i>0$.  Then both $x_i$, $y_j$ are matched or both unmatched, but then $x_{i-1}=y_{j-1}$, a contradiction with the minimality of $i$.  
\end{proof}
Because the same vertex of degree one cannot appear in more than one $T_v$, there is $Y\subseteq Y_0$ of measure $<\delta$ consisting of the roots of trees with a vertex of degree 1.  
Let $Z\subseteq Y_0$ be the set of starting vertices of augmenting paths of length $\leq 2K+1$. Then, by the proof of Theorem \ref{appl}, \[ K\cdot (\mu Y_0 - \delta - \frac{\epsilon - \delta}{2}) = \mu \big( (Y_0 \setminus (Y \cup Z))_K \big).\] So \[ \mu Y_0 < \frac{1}{K} + \delta + \frac{\epsilon - \delta }{2} < \epsilon .\]

\end{proof}

\end{section}

\begin{section}{A cancellation result}
Let $R$ be a sufficiently saturated expansion of a real closed field.
We define an equivalence relation $\sim$ on $B[n]$ as follows.  
\begin{definition}\label{equiv}
Let $X,Y \in B[n]$.  
Then $X\sim Y$ iff there are definable open partitions of $\{ X \}_{1=1}^{k}$, $\{Y_i \}_{i=1}^{k}$ of $X$ and $Y$ respectively, and there are $n$-isomorphisms $f_1 , \dots ,f_k$ such that $Y_i = f(X_i)$ for each $i$. 

We let $\mathcal{B}$ be the semigroup $(B[n]/\sim , + )$, where the binary operation $+$ is given by $a+b = c$, with $c$ the equivalence class containing a disjoint union of $a$ and $b$.    
\end{definition}
The proof of the next theorem is based on the proof of cancellation from Tomkowicz, Wagon \cite{wagon}, p. 177.  While \cite{wagon} uses the Hall-Rado-Hall Infinite Marriage Theorem, we only have Theorem \ref{appl} at our disposition.

\begin{theorem}\label{2cancellation}
Let $\alpha , \beta \in \mathcal{B}$ have strongly bounded representatives and suppose $\alpha +\alpha =_a \beta +\beta $ in $\mathcal{B}$.  Then $\alpha =_{a}\beta $.
\end{theorem}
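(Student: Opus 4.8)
The plan is to follow the classical Schr\"oder--Bernstein-style cancellation argument from Tomkowicz--Wagon, p.~177, but with the Hall--Rado--Hall appeal replaced by Theorem~\ref{appl} (in the relaxed form of Corollary~\ref{almost2regular}), and with all equalities of measure read ``up to $=_a$''. Fix strongly bounded representatives: by hypothesis there are $X \in \alpha$, $Y \in \beta$ with $X, Y \in SB[n]$ and (after passing to disjoint copies realizing $2\alpha$ and $2\beta$) two disjoint copies $X_1 \dot\cup X_2$ and $Y_1 \dot\cup Y_2$ together with an $n$-isomorphism, or rather a finite patchwork of $n$-isomorphisms on an open partition, witnessing $X_1 \dot\cup X_2 =_a Y_1 \dot\cup Y_2$. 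Concretely this gives a definable bijection $h$ (a finite union of graphs of $n$-isomorphisms on open cells) from $X_1 \dot\cup X_2$ onto $Y_1 \dot\cup Y_2$, defined and measure-preserving outside a set of $\mu$-measure $0$, and such that the symmetric difference of its range with $Y_1 \dot\cup Y_2$ has measure $<\epsilon'$ for every $\epsilon'$.

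The key construction is the bipartite graph on $A := X_1 \dot\cup X_2$ and $B := Y_1 \dot\cup Y_2$ in which we connect each point of $X_i$ to its image under $h$ and also connect the two copies $X_1$, $X_2$ to each other via the identity-type identification $X_1 \to X_2$ (an $n$-isomorphism), and symmetrically $Y_1 \leftrightarrow Y_2$. After bookkeeping, every vertex gets degree $2$ except on a small set coming from the $=_a$-slack in $h$, where it drops to $1$; this is exactly the hypothesis of Corollary~\ref{almost2regular}. That corollary produces, for each $\epsilon$, a definable matching $M$ covering all of $A \dot\cup B$ outside a set of $\mu$-measure $<\epsilon$. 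The matching decomposes $A \dot\cup B$ (mod a null/$\epsilon$-small set) into finite alternating paths and cycles; on each such component the matching edges, being $n$-isomorphisms or compositions thereof, exhibit a definable, piecewise-$n$-isomorphic bijection that, restricted appropriately, matches a subset of $X_1$ with a subset of $Y_1$ and identifies the leftover of $X_2$ with the leftover of $Y_2$. Chasing the alternating structure exactly as in the finite/Borel Schr\"oder--Bernstein argument yields $n$-isomorphisms (on a suitable open partition) witnessing $X_1 =_{\epsilon} Y_1$ as elements of $SB[n]$, hence $\mu(X_1 \triangle f(\text{something}))<\epsilon$; letting $\epsilon \to 0$ and using that $\sim$-classes are closed under such open repartitioning gives $\alpha =_a \beta$.

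The steps, in order: (i) unwind Definition~\ref{equiv} and the hypothesis $2\alpha =_a 2\beta$ to get the explicit definable piecewise-$n$-isomorphism $h : X_1 \dot\cup X_2 \to Y_1 \dot\cup Y_2$ together with the auxiliary identifications $X_1 \leftrightarrow X_2$ and $Y_1 \leftrightarrow Y_2$; (ii) assemble the bipartite graph $\mathcal{G}$ on these vertex sets, verify it is definable, $\mu$-preserving, and $2$-regular off a set of arbitrarily small measure; (iii) apply Corollary~\ref{almost2regular} to get, for each $\epsilon>0$, a definable matching $M_\epsilon$ covering all but $<\epsilon$; (iv) run the alternating-path/cycle analysis of $M_\epsilon \cup (\text{auxiliary identifications})$ to extract the piecewise-$n$-isomorphism exhibiting $X_1 =_\epsilon Y_1$; (v) conclude $\alpha =_a \beta$ by letting $\epsilon \to 0$.

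I expect step (iv) to be the main obstacle. In the classical setting the Schr\"oder--Bernstein bijection is obtained by a clean case split on whether each $\aleph_0$-chain is ``$A$-originating'' or ``$B$-originating'', but here two things need care: first, the components of $M_\epsilon$ together with the $n$-isomorphic identifications $X_1 \leftrightarrow X_2$, $Y_1 \leftrightarrow Y_2$ must be shown to be, $\mu$-almost everywhere, finite paths/cycles of uniformly bounded length (so that the induced bijection is a \emph{finite} union of graphs of $n$-isomorphisms, as Definition~\ref{equiv} demands, not an infinite union) — this should follow from definability and o-minimal cell decomposition, much as finiteness of $R_a$ was used in Proposition~\ref{shortaps}; second, one must check that the Jacobian condition $|Jf|=1$ is preserved when composing the finitely many $n$-isomorphisms along a component and when restricting to the relevant open sub-cells, and that the small uncovered/$=_a$-slack sets accumulated at each of the finitely many $\epsilon$-stages still total $<\epsilon$. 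None of these is deep, but getting the bounded-length claim and the open-partition compatibility precisely right is where the real work lies.
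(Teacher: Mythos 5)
Your high-level plan---set up a definable, $\mu$-preserving bipartite graph that is $2$-regular off a small set, feed it to Corollary~\ref{almost2regular}, and read the resulting matching off as a witness for $\alpha =_{\epsilon}\beta$---is the right one and is the one the paper uses. But the graph you build does not satisfy the corollary's hypotheses. Your bipartition is $X_1\dot\cup X_2$ versus $Y_1\dot\cup Y_2$ with bipartite edges $x\sim h(x)$; that alone is a degree-$1$ graph. The ``identity-type identifications'' $X_1\leftrightarrow X_2$ and $Y_1\leftrightarrow Y_2$ run \emph{within} a part, so they are not edges of any bipartite graph on this bipartition, and ``after bookkeeping every vertex gets degree $2$'' never actually happens. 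Even if you instead declared $x\sim h(\phi(x))$ as a second bipartite edge to force $2$-regularity, the matching you would obtain is a bijection between subsets of $X_1\dot\cup X_2$ and $Y_1\dot\cup Y_2$, not between subsets of $X_1$ and $Y_1$, and you really would need the Schr\"oder--Bernstein chain analysis in your step (iv), together with the bounded-length and Jacobian bookkeeping you flag there as the main obstacle.

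The paper avoids all of this with one move you are missing: take the vertices to be \emph{pairs}. Set $\bar A=\{(a,\phi(a)):a\in A\}$ and $\bar B=\{(b,\psi(b)):b\in B\}$, and make $(a,\phi(a))$ adjacent to $(b,\psi(b))$ iff one of $\theta(a)=b$, $\theta(\phi(a))=b$, $\theta(a)=\psi(b)$, $\theta(\phi(a))=\psi(b)$ holds, where $\theta$ is the map witnessing $\alpha+\alpha=_{a}\beta+\beta$. Each vertex of $\bar A$ then has at most two neighbors (one via $\theta(a)$, one via $\theta(\phi(a))$), and the $=_a$-slack in $\theta$ is exactly what causes the degree to drop on a set of measure $<\epsilon/2$; after discarding degree-$0$ vertices and the mutually-matched degree-$1$ pairs, Corollary~\ref{almost2regular} applies. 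The payoff of the pairing is decisive: a matching $M\subseteq H$ \emph{is} the witness. An $M$-edge matches $(a,\phi(a))$ with $(b,\psi(b))$, and $a\mapsto b$ is, piecewise on an open partition of the covered part of $A$, one of $\theta$, $\theta\circ\phi$, $\psi^{-1}\circ\theta$, $\psi^{-1}\circ\theta\circ\phi$, hence already a finite union of graphs of $n$-isomorphisms. No alternating-path or chain-chasing analysis is needed, and your steps (iv)--(v) with their attendant bounded-length and Jacobian concerns simply disappear.
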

\begin{proof}
Let $\epsilon \in \mathbb{R}^{>0}$, and
let $A,A'$, and $B,B'$ respectively, be two pairs of disjoint copies of strongly bounded representatives of $\alpha $, and of $\beta $ respectively.  Let $\phi \colon A\to A'$, $\psi \colon B\to B'$ and $\theta \colon A\dot\cup A' \to B\dot\cup B'$ witness $A\sim A'$, $B\sim B'$ and $A\dot\cup A' \sim_{\frac{\epsilon }{2}} B\dot\cup B'$ respectively. 

We define a bipartite graph $\mathcal{H}$ as follows.  The bipartition consists of the two sets \[ \bar{A} = \{ (a,\phi (a))\colon a\in A )\} \mbox{ and } \bar{B} = \{(b,\psi (b))\colon b\in B \}, \] and we let $(a,\phi (a))$ be incident with $(b,\psi (b))$ iff $\theta (a)=b$ or $\theta (\phi (a)) =b$ or $\theta (a)=\psi (b)$ or $\theta (\phi (a)) =\psi (b)$.  
Then $\mathcal{H}=(\bar{A}\cup \bar{B},H)$ is definable bipartite and $\mu$-preserving, and every vertex is of degree $\leq 2.$
To construct a map witnessing $\alpha =_{\epsilon } \beta $, it will suffice to find a definable matching $M\subseteq H$ covering $\bar{A}\cup \bar{B}$ outside of a set of $\mu$-measure $<\epsilon$. 

Let $\bar{A}_0$ and $\bar{B}_0$ be the subsets of $\bar{A}$ and of $\bar{B}$ respectively of vertices of degree 0.  Note that $\mu (\bar{A}_0 \cup \bar{B}_0)<\frac{\epsilon}{2}$, and replace $\bar{A}$ with $\bar{A}\setminus \bar{A}_0$ and $\bar{B}$ with $\bar{B} \setminus \bar{B}_0$ in $\mathcal{H}$.

Let $\bar{A}_1 \subseteq \bar{A}$ and $\bar{B}_1 \subseteq \bar{B}$ be the sets of vertices of degree 1 that are incident with another vertex of degree 1.  Then $H\cap (\bar{A}_1 \times \bar{B}_1 )$ is the graph of a bijection $\bar{A}_1 \to \bar{B}_1$, so there is no harm in replacing $\mathcal{H}$ with the graph  \[\big( (\bar{A} \setminus \bar{A}_1 ) \dot\cup (\bar{B}\setminus \bar{B}_1) \big), H \cap \big( (\bar{A} \setminus \bar{A}_1 ) \times (\bar{B}\setminus \bar{B}_1 ) \big).\] 
Then the remaining vertices of degree 1 in $\bar{A}$ are $(a,\phi (a))$ such that $\theta (a) \uparrow$ and $\theta (\phi (a)) \downarrow$, or vice versa, and similarly for the vertices of degree 1 in $\bar{B}$.
So $\mathcal{H}$ is 2-regular outside of a set of measure $<\frac{\epsilon}{2}$.  

By Corollary \ref{almost2regular}, $H$ contains a definable matching $M$ covering all vertices outside of a set of $\mu$-measure $<\frac{\epsilon}{2}$, hence covering all vertices of the original $\mathcal{H}$ outside of a set of $\mu$-measure $<\epsilon$. 

\end{proof}





\end{section}

\end{document}